\newtheorem{teo}{Theorem}
\newtheorem{lem}[teo]{Lemma}
\newtheorem{cor}[teo]{Corollary}
\theoremstyle{definition}
\newtheorem{defi}[teo]{Definition}
\newtheorem{rem}[teo]{Remark}
\newcommand{\RP}{\mathbb{RP}}
\newcommand{\matN}{\mathbb{N}}
\newcommand{\matR}{\mathbb{R}}
\newcommand{\matS}{\mathbb{S}}
\newcommand{\matH}{\mathbb{H}}
\newcommand{\matE}{\mathbb{E}}
\newcommand{\calC}{{\cal C}}
\newcommand{\calR}{{\cal R}}
\newcommand{\calT}{{\cal T}}
\newcommand{\ptwoirred}{$\mathbb{P}^2$-irreducible}
\newcommand{\Lthreeone}{L_{3,1}}
\newcommand{\Lfourone}{L_{4,1}}
\newcommand{\twoRPtwo}{2\times\RP^2}
\newcommand{\timtil}{\begin{picture}(12,12)
\put(2,0){$\times$}\put(2,4.5){$\sim$}\end{picture}}
\newcommand{\chiorb}{\chi^{\rm orb}}
\newcommand{\co}{\colon\thinspace}
\title{A 3-manifold complexity via immersed surfaces\footnotetext{This research has been supported by the grant ``Ennio De Giorgi'' (2007-2008) from the Department of Mathematics of the University of Salento.}}
\author{Gennaro {\sc Amendola}}
\begin{document}

\maketitle

\begin{abstract}
We define an invariant, which we call surface-complexity, of closed 3-manifolds by means of Dehn surfaces.
The surface-complexity of a manifold is a natural number measuring how much the manifold is complicated.
We prove that it fulfils interesting properties: it is subadditive under connected sum and finite-to-one on \ptwoirred\ manifolds.
Moreover, for \ptwoirred\ manifolds, it equals the minimal number of cubes in a cubulation of the manifold, except for the sphere $S^3$, the projective space $\RP^3$ and the lens space $\Lfourone$, which have surface-complexity zero.
We will also give estimations of the surface-complexity by means of triangulations, Heegaard splittings, surgery presentations and Matveev complexity.
\end{abstract}

\vspace{1pt}

\begin{center}
\begin{small}
{\bf Keywords}\\[4pt]
3-manifold, complexity, immersed surface, cubulation.

\vspace{.5cm}

{\bf MSC (2000)}\\[4pt]
57M27 (primary), 57M20 (secondary).
\end{small}
\end{center}

\section*{Introduction}

The problem of filtering closed 3-manifolds in order to study them systematically has been approached by many mathematicians.
The aim is to find a function from the class of closed 3-manifolds to the set of natural numbers.
The number associated to a closed 3-manifold should be a measure of how much the manifold is complicated.
For closed surfaces, this can be achieved by means of genus.
For closed 3-manifolds, the problem has been studied very much and many possible functions has been found.
For example, the Heegaard genus, the Gromov norm, the Matveev complexity have been considered.

All these functions fulfil many properties.
For instance, they are additive under connected sum.
However, some of them have drawbacks.
The Heegaard genus and the Gromov norm are not finite-to-one, while the Matveev complexity is.
Hence, in order to carry out a classification process, the latter one is more suitable than the former ones.
The Matveev complexity is also a natural measure of how much the manifold is complicated, because if a closed 3-manifold is \ptwoirred\ and different from the sphere $S^3$, the projective space $\RP^3$ and the Lens space $\Lthreeone$, then its Matveev complexity is the minimum number of tetrahedra in a triangulation of the manifold (the Matveev complexity of $S^3$, $\RP^3$ and $\Lthreeone$ is zero).
Such functions could also be tools to give proofs by induction.
For instance, the Heegaard genus was used by Rourke to prove by induction that every closed orientable 3-manifold is the boundary of a compact orientable 4-manifold~\cite{Rourke}.

The aim of this paper is to define another function (we will call {\em surface-complexity}) from the class of closed 3-manifolds to the set of natural numbers, to prove that it fulfils some properties, to give bounds for it, and to start an enumeration process (we will give a complete list of closed 3-manifolds with complexity one in a subsequent paper~\cite{Amendola:next}).

In~\cite{Vigara:calculus} Vigara used triple points of particular transverse immersions of connected closed surfaces to define the triple point spectrum of a 3-manifold.
The definition of the surface-complexity is similar to Vigara's one, but it has the advantage of being more flexible.
This flexibility will allow us to prove many properties fulfilled by the surface-complexity.

We now sketch out the definition and the results of this paper.
The surface-complexity of a closed 3-manifold will be defined by means of {\em quasi-filling Dehn surfaces} ({\em i.e.}~images of transverse immersions of closed surfaces that divide the manifold into balls).
\begin{description}
\item[Definition.] The surface-complexity $sc(M)$ of a closed 3-manifold $M$ is the minimal number of triple points of a quasi-filling Dehn surface of $M$.
\end{description}

Three properties we will prove are the following ones.
\begin{description}
\item[Finiteness.]
For any integer $c$ there exists only a finite number of connected closed \ptwoirred\ 3-manifolds having surface-complexity $c$.
\item[Naturalness.]
The surface-complexity of a connected closed \ptwoirred\ 3-manifold $M$, different from $S^3$, $\RP^3$ and $\Lfourone$, is the minimal number of cubes in a cubulation of $M$.
The surface complexity of $S^3$, $\RP^3$ and $\Lfourone$ is zero.
\item[Subadditivity.]
The complexity of the connected sum of closed 3-manifolds is less than or equal to the sum of their complexities.
\end{description}
The naturalness property will follow from the features of {\em minimal} quasi-filling Dehn surfaces of connected closed \ptwoirred\ 3-manifolds, where minimal means ``with a minimal number of triple points''.
We will call a quasi-filling Dehn surface of a 3-manifold $M$ {\em filling}, if its singularities induce a cell-decomposition of $M$.
The cell-decomposition dual to a filling Dehn-surface of $M$ is actually a cubulation of $M$.
Hence, in order to prove the naturalness property, we will prove that every connected closed \ptwoirred\ 3-manifold, different from $S^3$, $\RP^3$ and $\Lfourone$, has a minimal filling Dehn surface.
We point out that not all the minimal quasi-filling Dehn surfaces of \ptwoirred\ 3-manifolds are indeed filling.
However, they can be all constructed starting from filling ones (except for $S^3$, $\RP^3$ and $\Lfourone$, for which non-filling ones must be used) and applying a simple move, we will call {\em bubble-move}.

The surface-complexity is related to the Matveev complexity.
Indeed, if $M$ is a connected closed \ptwoirred\ 3-manifold different from $\Lthreeone$ and $\Lfourone$, the double inequality
$\frac{1}{8}c(M) \leqslant sc(M) \leqslant 4c(M)$
holds, where $c(M)$ denotes the Matveev complexity of $M$.
For the sake of completeness, we recall that we have $c(\Lthreeone)=0$, $sc(\Lthreeone)>0$, $c(\Lfourone)>0$ and $sc(\Lfourone)=0$.

The two inequalities above give also estimates of the surface-complexity.
In general, an exact calculation of the surface-complexity of a closed 3-manifold is very difficult, however it is relatively easy to estimate it.
More precisely, it is quite easy to give upper bounds for it, because constructing a quasi-filling Dehn surface of the manifold with the appropriate number of triple points suffices.
With this technique, we will give upper bounds for the surface-complexity of a closed 3-manifold starting from a triangulation, a Heegaard splitting and a surgery presentation on a framed link (in $S^3$) of it.

In the Appendix, we will state some results on closed 3-manifolds with surface-complexity one and we will give some examples.
However, we will postpone the theoretical proof of these results and the classification of the closed 3-manifolds with surface-complexity one to a subsequent paper~\cite{Amendola:next}.
For the sake of completeness, in the Appendix we will also give a brief description of what happens in the 2-dimensional case.
We plan to cope with the 4-dimensional case in a subsequent paper.

\section{Definitions}

Throughout this paper, all 3-manifolds are assumed to be connected and closed.
By $M$, we will always denote such a (connected and closed) 3-manifold.
Using the {\em Hauptvermutung}, we will freely intermingle the differentiable,
piecewise linear and topological viewpoints.

\paragraph{Dehn surfaces}
A subset $\Sigma$ of $M$ is said to be a {\em Dehn surface of $M$}~\cite{Papa}
if there exists an abstract (possibly non-connected) closed surface $S$ and a transverse immersion $f\co S\to M$ such that $\Sigma = f(S)$.

Let us fix for a while $f\co S\to M$ a transverse immersion (hence,
$\Sigma = f(S)$ is a Dehn surface of $M$). By transversality, the number
of pre-images of a point of $\Sigma$ is 1, 2 or 3; so there are three types of
points in $\Sigma$, depending on this number; they are called
{\em simple}, {\em double} or {\em triple}, respectively.
Note that the definition of the type of a point does not depend on the particular
transverse immersion $f\co S\to M$ we have chosen. In fact, the
type of a point can be also defined by looking at a regular neighbourhood (in
$M$) of the point, as shown in Fig.~\ref{fig:neigh_Dehn_surf}.
The set of triple points is denoted by $T(\Sigma)$; non-simple points are called {\em singular} and their set is denoted by $S(\Sigma)$.
\begin{figure}[t]
  \centerline{
  \begin{tabular}{ccc}
    \begin{minipage}[c]{3.5cm}{\small{\begin{center}
        \includegraphics{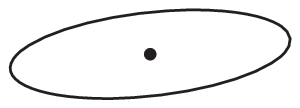}
      \end{center}}}\end{minipage} &
    \begin{minipage}[c]{3.5cm}{\small{\begin{center}
        \includegraphics{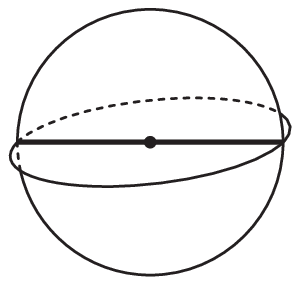}
      \end{center}}}\end{minipage} &
    \begin{minipage}[c]{3.5cm}{\small{\begin{center}
        \includegraphics{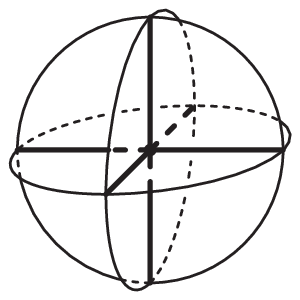}
      \end{center}}}\end{minipage} \\
    \begin{minipage}[t]{3.5cm}{\small{\begin{center}
        Simple\\point
      \end{center}}}\end{minipage} &
    \begin{minipage}[t]{3.5cm}{\small{\begin{center}
        Double\\point
      \end{center}}}\end{minipage} &
    \begin{minipage}[t]{3.5cm}{\small{\begin{center}
        Triple\\point
      \end{center}}}\end{minipage}
  \end{tabular}}
  \caption{Neighbourhoods of points (marked by thick dots) of a Dehn surface.}
  \label{fig:neigh_Dehn_surf}
\end{figure}
From now on, in all figures, triple points are always marked by thick dots and the singular set is also drawn thick.

\begin{rem}
  The topological type of the abstract surface $S$ is determined unambiguously by
  $\Sigma$.
\end{rem}

\paragraph{(Quasi-)filling Dehn surfaces}
A Dehn surface $\Sigma$ of $M$ will be called {\em quasi-filling} if $M \setminus \Sigma$ is made up of balls.
Moreover, $\Sigma$ is called {\em filling}~\cite{Montesinos} if its singularities induce a cell-decomposition of $M$; more precisely,
\begin{itemize}
\item $T(\Sigma) \neq \emptyset$,
\item $S(\Sigma) \setminus T(\Sigma)$ is made up of intervals (called {\em
    edges}),
\item $\Sigma \setminus S(\Sigma)$ is made up of discs (called {\em regions}),
\item $M \setminus \Sigma$ is made up of balls ({\em i.e.}~$\Sigma$ is quasi-filling).
\end{itemize}

Since $M$ is connected and $M\setminus\Sigma$ is made up of (disjoint) balls, the quasi-filling Dehn surface $\Sigma$ is connected.
Consider a small regular neighbourhood $\calR(\Sigma)$ of $\Sigma$ in $M$, then $M\setminus\calR(\Sigma)$ is made up of balls whose closures are disjoint and $M$ can be obtained from $\calR(\Sigma)$ by filling up its boundary components with balls.
Moreover, we have that $M$ minus some balls, {\em i.e.}~$\calR(\Sigma)$, collapses to $\Sigma$.

\begin{rem}\label{rem:Sigma_surf}
Suppose $\Sigma$ is a surface ({\em i.e.}~$S(\Sigma)=\emptyset$).
Then the boundary of $\calR(\Sigma)$ of $\Sigma$ in $M$ is a two-fold covering of $\Sigma$.
Since the boundary of $\calR(\Sigma)$ is made up of spheres and $\Sigma$ is connected, $\Sigma$ is the sphere $S^2$ or the projective plane $\RP^2$.
Therefore, $M$ is the sphere $S^3$ or the projective space $\RP^3$, respectively.
\end{rem}

Let us give some other examples.
Two projective planes intersecting along a loop non-trivial in both of them form a quasi-filling Dehn surface of $\RP^3$, which will be called {\em double projective plane} and denoted by $\twoRPtwo$.
The sphere intersecting a torus (resp.~a Klein bottle) along a loop is a quasi-filling Dehn surface of $S^2\times S^1$ (resp.~$S^2 \timtil S^1$) without triple points.
The {\em quadruple hat} ({\em i.e.}~a disc whose boundary is glued four times along a circle) is a quasi-filling Dehn-surface of the lens-space $\Lfourone$ without triple points.
If we identify the sphere $S^3$ with $\matR^3\cup\{\infty\}$, the three coordinate planes in $\matR^3$, with $\infty$ added, form a filling Dehn surface of $S^3$ with two triple points: $(0,0,0)$ and $\infty$.

It is by now well-known that a filling Dehn surface determines $M$ up to homeomorphism and that every $M$ has standard filling Dehn surfaces (see, for instance, Montesinos-Amilibia~\cite{Montesinos} and Vigara~\cite{Vigara:present}, see also~\cite{Amendola:surf_inv}).
It is not clear how any two standard filling Dehn spheres of the same $M$ are related to each other.
There are only partial results; for instance, we provided in~\cite{Amendola:surf_inv} a finite calculus for nullhomotopic filling Dehn spheres, deducing it
from another one, described by Vigara~\cite{Vigara:calculus}, which has been derived from the more general Homma-Nagase calculus~\cite{Homma-NagaseI, Homma-NagaseII} (see also Hass and Hughes~\cite{Hass-Hughes} and Roseman~\cite{Roseman}).

\paragraph{Abstract filling Dehn surfaces}
A filling Dehn surface $\Sigma$ of $M$ is contained $M$.
However, we can think of if as an abstract cell complex.
For the sake of completeness, we point out that the abstract cell complex $\Sigma$ determines $M$ (and the abstract surface $S$ such that $\Sigma=f(S)$ where $f \co S \rightarrow M$) up to homeomorphism.
The proof of this fact is quite easy (and not strictly connected with the aim of this paper), so we leave it to the reader.

\paragraph{Surface-complexity}
The surface-complexity of $M$ can now be defined as the minimal number of triple points of a quasi-filling Dehn surface of $M$. More precisely, we give the following.
\begin{defi}
The {\em surface-complexity} $sc(M)$ of $M$ is equal to $c$ if $M$ possesses a quasi-filling Dehn surface with $k$ triple points and has no quasi-filling Dehn surface with less than $k$ triple points.
In other words, $sc(M)$ is the minimum of $|T(\Sigma)|$ over all quasi-filling Dehn surfaces $\Sigma$ of $M$.
\end{defi}

We will classify the 3-manifolds having surface-complexity zero in the following section.
At the moment, we can only say that $S^3$, $\RP^3$, $S^2\times S^1$, $S^2 \timtil S^1$ and $\Lfourone$ have surface-complexity zero, because we have seen above that they have quasi-filling Dehn surfaces without triple points.

\paragraph{Triple point spectrum}
For the sake of completeness, we give also Vigara's definition of the triple point spectrum~\cite{Vigara:calculus}.
The {\em triple point spectrum} of $M$ is a sequence of integers $t_i(M)$, with $i\in\matN$, such that $t_i(M)$ is the minimal number of triple points of a filling Dehn surface with genus $i$ of $M$.

\section{Minimality and finiteness}

A quasi-filling Dehn surface $\Sigma$ of $M$ is called {\em minimal} if it has a minimal number of triple points among all quasi-filling Dehn surfaces of $M$, {\em i.e.}~$|T(\Sigma)|=sc(M)$.

\begin{teo}\label{teo:minimal_filling}
Let $M$ be a (connected and closed) \ptwoirred\ 3-manifold.
\begin{itemize}
\item
If $sc(M)=0$, then $M$ is the sphere $S^3$, the projective space $\RP^3$ or the lens space $\Lfourone$.
\item
If $sc(M)>0$, then $M$ has a minimal filling Dehn surface.
\end{itemize}
\end{teo}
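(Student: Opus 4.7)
The plan is to treat the two bullets separately, both via a structural analysis of quasi-filling Dehn surfaces with few triple points. For the first bullet I fix a quasi-filling Dehn surface $\Sigma$ of $M$ with $T(\Sigma)=\emptyset$. The sub-case $S(\Sigma)=\emptyset$ is immediate: Remark~\ref{rem:Sigma_surf} forces $M=S^3$ or $M=\RP^3$. In the sub-case $S(\Sigma)\neq\emptyset$ the singular set is a disjoint union of embedded circles $C_1,\ldots,C_d$, and a tubular neighbourhood of each $C_i$ meets $\Sigma$ in two transverse sheets along $C_i$, each locally an annulus or a M\"obius band; only finitely many local models arise.

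The key claim for the first bullet is that, under the \ptwoirred\ hypothesis, $\Sigma$ must realise one of the three examples already discussed in the Definitions section: a sphere (giving $S^3$), a projective plane or $\twoRPtwo$ (giving $\RP^3$), or a quadruple hat (giving $\Lfourone$). My strategy is to show that any other combinatorial type produces either an embedded $S^2$ in $M$ that does not bound a ball, a non-separating $S^2$, or a two-sided embedded $\RP^2$ -- all forbidden by the \ptwoirred\ hypothesis. Concretely, from two adjacent regions of $\Sigma\setminus S(\Sigma)$ assembled across a double circle with annulus/annulus local model one can build an embedded 2-sphere in $M$, and an analogous construction with M\"obius sheets produces a two-sided $\RP^2$. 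An Euler-characteristic tally over regions, double circles, and ball components of $M\setminus\Sigma$ then pins down the remaining possibilities and in particular forces $d\leqslant 1$. I expect the main obstacle here to be the case $d\geqslant 2$, where one has to rule out configurations of multiple disjoint double circles that do not themselves create an essential $S^2$.

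For the second bullet I start from a minimal quasi-filling Dehn surface $\Sigma$ of $M$ with $|T(\Sigma)|=sc(M)>0$ and aim to modify it into a filling one with the same number of triple points. If $\Sigma$ is already filling there is nothing to do, so suppose it is not. The obstructions are (i)~a circle component of $S(\Sigma)\setminus T(\Sigma)$ and (ii)~a non-disc component of $\Sigma\setminus S(\Sigma)$. For~(ii), I would use the \ptwoirred\ hypothesis to find a compressing disc for an essential simple closed curve in a non-disc region and compress along it, which reduces the topology of the region without creating new triple points. For~(i), since $T(\Sigma)\neq\emptyset$, I would use an already existing triple point to perform a local modification that replaces the offending circle by an interval, preserving $|T(\Sigma)|$. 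Both moves strictly decrease a combinatorial measure of ``distance from being filling'', so iteration terminates in a minimal filling Dehn surface.

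The main obstacle across both bullets is that the \ptwoirred\ hypothesis must consistently provide the global objects the argument needs: a bounding ball for each embedded sphere in Part~1, and a compressing disc for each essential curve on a non-disc region in Part~2. This is precisely what fails on $S^3$, $\RP^3$ and $\Lfourone$, and that failure is what singles them out as the exceptional manifolds of the first bullet and is what must be excluded in the second.
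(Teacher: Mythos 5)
Your underlying idea---compress a quasi-filling Dehn surface using $\mathbb{P}^2$-irreducibility until its combinatorics are simple enough to classify---is the paper's, but there are two genuine gaps in the execution.

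For the second bullet, obstruction~(i) (a circle component of $S(\Sigma)\setminus T(\Sigma)$) must not be given its own local move. The operation you describe (``use an existing triple point to replace the circle by an interval while preserving $|T(\Sigma)|$'') is neither exhibited nor clearly possible without changing the triple point count or the manifold. The paper does not need it. It first resolves~(ii) exactly as you propose: a non-disc region contains an essential curve $\gamma$, the normal strip over $\gamma$ caps off with discs from the ball components of $M\setminus\calR(\Sigma)$ to an embedded sphere, irreducibility gives a bounding ball, and compressing across that ball raises the Euler characteristic of the offending region while leaving $|T(\Sigma)|$ fixed (the M\"obius/$\RP^2$ alternative is excluded at this stage). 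Once every region of the resulting $\Sigma'$ is a disc, a short connectedness argument finishes: $\Sigma'$ connected with all regions discs forces $S(\Sigma')$ to be connected, and a connected singular graph containing a triple point cannot have a circle component disjoint from $T(\Sigma')$. So (i) disappears automatically after (ii); treating it as an independent obstruction is both unjustified and unnecessary.

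For the first bullet, the claim that $\Sigma$ itself ``must realise one of the three examples'' is false, and the hoped-for conclusion $d\leqslant 1$ cannot be extracted from the original $\Sigma$ by an Euler-characteristic count: applying bubble-moves to any of the three model surfaces yields minimal quasi-filling Dehn surfaces of the same manifold with no triple points, arbitrarily many extra double circles, and non-disc (annular) regions. Moreover the contradiction-by-essential-sphere strategy is misplaced---$\mathbb{P}^2$-irreducibility hands you a bounding ball to \emph{compress across}, not a contradiction, so you cannot simply rule out ``other combinatorial types''. The correct route, and the one the paper takes, is to run the same compression as in your step~(ii) on the $T(\Sigma)=\emptyset$ surface until all regions are discs; then $S(\Sigma')$ is connected and, having no triple points, is a single circle, and a finite enumeration of the germ-interchange patterns along that circle yields exactly the sphere-pair, $\twoRPtwo$, and the quadruple hat, hence $M\in\{S^3,\RP^3,\Lfourone\}$. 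Your first bullet omits this compression step, which is the load-bearing part of the argument.
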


\begin{proof}
Let $\Sigma$ be a minimal quasi-filling Dehn surface of $M$.
If we have $S(\Sigma)=\emptyset$ ({\em i.e.}~$\Sigma$ is a surface), by virtue of Remark~\ref{rem:Sigma_surf}, we have that $M$ is the sphere $S^3$ or the projective space $\RP^3$.

Then, we suppose $S(\Sigma)\neq\emptyset$.
We will first prove that $M$ has a quasi-filling Dehn surface $\Sigma'$ such that
$\Sigma' \setminus S(\Sigma')$ is made up of discs.
In fact, suppose there exists a component $C$ of $\Sigma \setminus S(\Sigma)$ that is not a disc.
$C$ contains a non-trivial orientation preserving (in $C$) simple closed curve $\gamma$.
Consider a strip $A$ contained in a small regular neighbourhood $\calR(\Sigma)$ of $\Sigma$ in $M$ such that $A\cap\Sigma=\gamma$ and $A\cap\partial\calR(\Sigma)=\partial A$.
(Note that $A$ is an annulus or a M\"obius strip depending on whether $\gamma$ is orientation preserving in $M$ or not.)
Since $M\setminus\calR(\Sigma)$ is made up of balls, we can fill up $\partial A$ with one or two discs disjoint from $\Sigma$ getting a sphere or a projective plane (depending on whether $A$ is an annulus or a M\"obius strip).
Note that both the sphere and the projective plane are transversely orientable, hence the second case cannot occur (because $M$ is \ptwoirred).
In the first case, since $M$ is \ptwoirred, the sphere found bounds a ball, say $B$.
Since $B$ is a ball and $\Sigma\cap\partial B=\gamma$ is a simple closed curve, we can replace the portion of $\Sigma$ contained in $B$ with a disc, getting a new quasi-filling Dehn surface of $M$.
Note that the Euler characteristic of the component of $\Sigma \setminus S(\Sigma)$ containing $\gamma$ has increased, that no new non-disc component has been created and that the number of triple points has not changed.
Hence, by repeatedly applying this procedure, we eventually get a quasi-filling Dehn surface, say $\Sigma'$, of $M$ such that $\Sigma' \setminus S(\Sigma')$ is made up of discs.

Since $\Sigma'$ is connected and $\Sigma' \setminus S(\Sigma')$ is made up of discs, we have that $S(\Sigma')$ is also connected.
If we have $sc(M)>0$ ({\em i.e.}~$T(\Sigma')$ is not empty), $S(\Sigma) \setminus T(\Sigma)$ cannot contain circles and hence $\Sigma'$ is filling ({\em i.e.}~$M$ has a minimal filling Dehn surface).
Otherwise, if we have $sc(M)=0$ ({\em i.e.}~$T(\Sigma')$ is empty), $S(\Sigma)$ is made up of one circle.
Since $\Sigma' \setminus S(\Sigma')$ is made up of discs, the Dehn surface $\Sigma'$ is completely determined by the regular neighbourhood of $S(\Sigma')$ in $\Sigma'$.
This neighbourhood depends on how the germs of disc are interchanged along the curve $S(\Sigma)$.
Among all possibilities we must rule out those not yielding a quasi-filling Dehn surface, hence only three ones must be taken into account (up to symmetry):
\begin{itemize}
\item two spheres intersecting along the circle $S(\Sigma)$ which form a Dehn surface of $S^3$; \item the double projective plane $\twoRPtwo$ which is a Dehn surface of $\RP^3$;
\item the four-hat which is a Dehn surface of $\Lfourone$.
\end{itemize}
The proof is complete.
\end{proof}

Since there is a finite number of filling Dehn surfaces having a fixed number of triple points, we have the following corollary of Theorem~\ref{teo:minimal_filling}.

\begin{cor}
For any integer $c$ there exists only a finite number of (connected and closed) \ptwoirred\ 3-manifolds having surface-complexity $c$.
\end{cor}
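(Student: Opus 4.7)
The plan is to reduce the corollary to the finiteness of combinatorial filling Dehn surfaces with a prescribed number of triple points, after separating the trivial case $c=0$.

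First I would dispose of $c=0$ directly: by Theorem~\ref{teo:minimal_filling}, every connected closed \ptwoirred\ 3-manifold with $sc(M)=0$ is homeomorphic to $S^3$, $\RP^3$ or $\Lfourone$, so the set is not only finite but has at most three elements.

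For $c\geqslant 1$, I would apply Theorem~\ref{teo:minimal_filling} to conclude that $M$ admits a minimal \emph{filling} Dehn surface $\Sigma$ with exactly $c$ triple points. Since (as recalled in the paragraph on abstract filling Dehn surfaces) the abstract cell complex $\Sigma$ determines $M$ up to homeomorphism, it suffices to show that only finitely many abstract filling Dehn surfaces have exactly $c$ triple points. The key combinatorial input is that near a triple point the local picture is three coordinate planes meeting at the origin, so each triple point has six half-edges of $S(\Sigma)$ emanating from it. Hence $S(\Sigma)$ is a $6$-regular graph with $c$ vertices; in particular it has exactly $3c$ edges and there are only finitely many such graphs. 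Because $\Sigma$ is filling, $\Sigma\setminus S(\Sigma)$ consists of discs glued along words in the edges of $S(\Sigma)$, and each edge of $S(\Sigma)$ lies in the boundary of exactly four such regions (since locally along a double arc two surface sheets meet, each contributing two sides); this bounds the total combinatorial length of the attaching words, and hence bounds the number of regions and the possible gluing patterns to a finite set. Thus only finitely many abstract filling Dehn surfaces carry $c$ triple points, and the corollary follows.

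The main obstacle I expect is making the finiteness of the combinatorial data rigorous: one must be careful that the regions glued to $S(\Sigma)$, although discs, are attached by words whose length is controlled (by four times the number of edges of $S(\Sigma)$), so that the set of combinatorial completions of the $6$-regular graph $S(\Sigma)$ to a filling Dehn surface with $c$ triple points is indeed finite. Once this is accepted, the result is immediate from Theorem~\ref{teo:minimal_filling} and the fact that the homeomorphism type of $M$ is reconstructed from the abstract complex.
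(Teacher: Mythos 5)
Your proof is correct and takes essentially the same route as the paper: reduce to Theorem~\ref{teo:minimal_filling} (existence of a minimal filling Dehn surface when $sc(M)>0$, and the explicit list when $sc(M)=0$), use that the abstract filling Dehn surface determines $M$ up to homeomorphism, and invoke finiteness of filling Dehn surfaces with $c$ triple points. The paper simply asserts that last finiteness as obvious, whereas you justify it with the six-regular graph and bounded-attaching-word count; that extra detail is correct and fills in exactly what the paper leaves implicit.
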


\subsection{Minimal quasi-filling Dehn surfaces}

Not all the minimal quasi-filling Dehn surfaces of \ptwoirred\ 3-manifolds are indeed filling.
However, they can be all constructed starting from filling ones (except for $S^3$, $\RP^3$ and $\Lfourone$, for which non-filling ones must be used) and applying a simple move.
The move acts on quasi-filling Dehn surfaces near a simple point as shown in Fig.~\ref{fig:bubble_move} and it is called {\em bubble-move}.
\begin{figure}[t]
  \centerline{\includegraphics{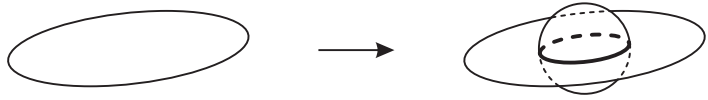}}
  \caption{Bubble-move.}
  \label{fig:bubble_move}
\end{figure}
Note that the result of applying a bubble-move to a quasi-filling Dehn surface of $M$ is a quasi-filling Dehn surface of $M$, but the result of applying a bubble-move to a filling Dehn-surface is not a filling Dehn-surface.
Note also that the bubble-move increases (by two) the number of connected components of $M\setminus\Sigma$.
If a quasi-filling Dehn surface $\Sigma$ is obtained from a quasi-filling Dehn surface $\overline{\Sigma}$ by repeatedly applying bubble-moves, we will say that $\Sigma$ {\em is derived from} $\overline{\Sigma}$.
Note that if $\Sigma$ is a quasi-filling Dehn surface of $M$ and is derived from $\overline{\Sigma}$, than $\overline{\Sigma}$ is a quasi-filling Dehn surface of $M$.

Theorem~\ref{teo:minimal_filling} can be improved by means of a slightly subtler analysis.

\begin{lem}\label{lem:all_minimal_filling_sphere}
Let $\Sigma$ be a minimal quasi-filling Dehn surface of the sphere $S^3$ and let $D$ be a closed disc disjoint from the singular set of $\Sigma$.
Then $\Sigma$ is derived from a sphere $S^2$ by means of bubble-moves not involving $D$.
\end{lem}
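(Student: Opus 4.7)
I would argue by induction on $n = |\pi_0(S(\Sigma))|$, the number of singular circles of $\Sigma$. In the base case $n = 0$, the Dehn surface $\Sigma$ has no self-intersection, so Remark~\ref{rem:Sigma_surf} applied to $S^3$ forces $\Sigma = S^2$, and the empty sequence of bubble-moves satisfies the claim.

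For the inductive step $n \geq 1$, my plan is to locate a ``bubble to remove'' from $\Sigma$ that avoids $D$, reverse-bubble-move it away, and invoke the inductive hypothesis. Concretely, I seek an embedded $2$-sphere $B$ --- the image of a sphere component of the abstract surface $S$ --- together with a closed disc $\bar E \subset \Sigma \setminus B$ such that $\partial \bar E = B \cap (\Sigma \setminus B) = c$ is a single circle of $S(\Sigma)$, the sphere $B$ bounds a closed ball $\bar\beta \subset S^3$ with $\bar\beta \cap \Sigma = B \cup \bar E$, and $\bar\beta \cap D = \emptyset$. Given such a $B$, excising it produces $\Sigma' = \Sigma \setminus B$. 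A direct check --- the two ball regions of $\bar\beta \setminus (B \cup \bar E)$ merge across the two hemispheres of $B$ with the two external ball regions adjacent to $B$, while every other region is unchanged --- shows that $\Sigma'$ is again a quasi-filling Dehn surface of $S^3$ with $|\pi_0(S(\Sigma'))| = n - 1$, and that $D$ still lies in $\Sigma' \setminus S(\Sigma')$. The induction hypothesis then supplies a sequence of bubble-moves from $S^2$ to $\Sigma'$ not involving $D$; prepending the forward bubble-move that reattaches $B$ (which does not involve $D$ since $\bar\beta \cap D = \emptyset$) gives the required derivation of $\Sigma$.

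Producing the bubble avoiding $D$ is the substantive step. Since $sc(S^3) = 0$ the surface $\Sigma$ has no triple points, so $S(\Sigma)$ is a disjoint union of $n$ circles and the abstract surface $S$ decomposes into components meeting one another along these circles. Using the Schoenflies theorem in $S^3$ together with the fact that every region of $S^3 \setminus \Sigma$ is a ball, one shows that each component of $S$ is a $2$-sphere and that the intersection pattern among these spheres is tree-like: were there a cycle in the intersection graph, one of the complementary regions would be forced to be a solid torus rather than a ball. A ``leaf'' of this tree --- a sphere component of $S$ whose image meets the rest of $\Sigma$ along a single circle $c$ --- yields the candidate bubble: $B$ is the leaf sphere, $\bar\beta$ is the ball bounded by $B$ on the appropriate side (supplied by Schoenflies), and $\bar E$ is the disc that $c$ cuts off on the adjacent sphere on the same side. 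When $n \geq 2$ the tree has at least two leaves, so one may pick a leaf bubble whose small ball $\bar\beta$ avoids the connected disc $D$; the case $n = 1$, where $\Sigma$ is exactly two spheres meeting along one circle, is handled by directly choosing the bubble on the side opposite to the face containing $D$.

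The principal obstacle I anticipate is the structural claim that the abstract surface $S$ is a disjoint union of spheres organised in a tree, which requires a careful argument combining Schoenflies with the ball structure of $S^3 \setminus \Sigma$ and possibly a localized version of the disc-replacement procedure from the proof of Theorem~\ref{teo:minimal_filling} carried out away from $D$. Once that structural fact is established, the reverse bubble-move and the induction are routine.
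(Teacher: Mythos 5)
Your induction scheme (on the number of singular circles) coincides with the paper's, and your base case is identical, but the inductive step takes a genuinely different route, and that route has a real gap.

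The paper does not attempt to describe the global structure of $\Sigma$. In the inductive step it selects a non-disc component $C$ of $\Sigma\setminus S(\Sigma)$, takes a non-trivial orientation-preserving simple closed curve $\gamma\subset C$ avoiding $D$, thickens it to an annulus $A$ in $\calR(\Sigma)$ and caps $\partial A$ off with two discs in the complementary balls to obtain an embedded sphere meeting $\Sigma$ exactly in $\gamma$. This sphere cuts $S^3$ into two balls $B_1\supset D$ and $B_2$, each containing strictly fewer singular circles; the paper then caps $\Sigma\cap B_2$ with a disc $D_2$, invokes the inductive hypothesis on the resulting quasi-filling Dehn surface of $S^3$ to peel off all its bubbles while avoiding $D_2$, transplants those reverse bubble-moves to $\Sigma$, and applies the hypothesis once more. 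No structural classification of $\Sigma$ is ever needed.

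Your proof, by contrast, rests on the assertion that $S$ is a disjoint union of $2$-spheres each embedded by $f$, that the resulting intersection graph is a tree, and that a leaf of the tree is literally a bubble. This assertion is not established in your sketch, and it is essentially equivalent to the lemma itself: once you know that $\Sigma$ is derived from $S^2$ by bubble-moves, the tree-of-embedded-spheres picture is immediate, but starting from it is circular unless you prove it independently. Three specific gaps remain. (i) It is not a priori clear that $f$ restricted to a component of $S$ is an embedding; a component could carry a self-double-curve, in which case Schoenflies does not apply to it and the notion of ``intersection graph'' does not even make sense in the way you use it. (ii) Even granting embeddedness, you must prove that each component is a sphere; an embedded higher-genus component is not excluded in one line, since its complementary solid pieces could conceivably be further cut into balls by the other components. (iii) The claim that ``a cycle in the intersection graph forces a solid torus region'' is plausible (one can check it, e.g., for three pairwise-meeting spheres with no triple point: the outer region is a solid torus, not a ball) but it is asserted, not proved, and the full statement needed is that \emph{every} deviation from the tree-of-bubbles picture destroys quasi-filling. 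You acknowledge (i)--(iii) as ``the principal obstacle'' and suggest invoking ``a localized version of the disc-replacement procedure'' from Theorem~\ref{teo:minimal_filling}; but that procedure applied carefully and inductively \emph{is} the paper's proof, so deferring to it means your argument has not actually produced an alternative. As written, the structural claim is a gap, not a routine verification.
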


\begin{proof}
Since the surface complexity of $S^3$ is zero, the number of triple points of $\Sigma$ is zero and hence the connected components of $S(\Sigma)$, if there is any, are simple closed curves.
If we have $S(\Sigma)=\emptyset$ ({\em i.e.}~$\Sigma$ is a surface), by virtue of Remark~\ref{rem:Sigma_surf}, we have that $\Sigma$ is the sphere $S^2$.
Then, we will suppose $S(\Sigma)\neq\emptyset$ and we will prove the statement by induction on the number of connected components of $S(\Sigma)$.

Suppose that $S(\Sigma)$ has one connected component.
We will firstly prove that $\Sigma \setminus S(\Sigma)$ is made up of discs.
In fact, suppose by contradiction that there exists a component $C$ of $\Sigma \setminus S(\Sigma)$ that is not a disc.
$C$ contains a non-trivial orientation preserving (in $C$) simple closed curve $\gamma$.
Consider a strip $A$ contained in a small regular neighbourhood $\calR(\Sigma)$ of $\Sigma$ in $S^3$ such that $A\cap\Sigma=\gamma$ and $A\cap\partial\calR(\Sigma)=\partial A$.
Note that $A$ is an annulus because $\gamma$ is orientation preserving in $S^3$.
Since $S^3\setminus\calR(\Sigma)$ is made up of balls, we can fill up the annulus $\partial A$ with two discs disjoint from $\Sigma$ getting a sphere.
This sphere bounds two balls, say $B_1$ and $B_2$.
Since $S(\Sigma)$ does not intersect the disconnecting sphere, we have that $S(\Sigma)$ is wholly contained either in $B_1$ or in $B_2$ (we can assume in $B_1$).
Hence, $\Sigma \cap B_2$ is a surface cutting $B_2$ up into two balls, whose boundaries contain $\Sigma \cap B_2$.
Since $\Sigma \cap B_2$ has only one boundary component ($\gamma$), it is a disc and hence $\gamma$ is trivial in $C$, a contradiction.
We have proved that $\Sigma \setminus S(\Sigma)$ is made up of discs.
Since $S(\Sigma)$ is connected and does not contain triple points, it is a circle and the Dehn surface $\Sigma$ is completely determined by the regular neighbourhood of $S(\Sigma')$ in $\Sigma$.
This neighbourhood depends on how the germs of disc are interchanged along the curve $S(\Sigma)$.
Among all possibilities, only one yields a quasi-filling Dehn surface of $S^3$:
more precisely, $\Sigma$ is composed of two spheres intersecting along the circle $S(\Sigma)$.
We conclude by noting that $\Sigma$ is derived from the sphere $S^2$ by means of a bubble-move not involving $D$.

Finally, suppose that $S(\Sigma)$ has $n$ components with $n>1$ and suppose that the statement is true for all minimal quasi-filling Dehn surfaces of $S^3$ whose singular set has less than $n$ components.
Consequently, $S(\Sigma)$ is not connected and there is a connected component of $\Sigma \setminus S(\Sigma)$ that is not a disc.
This component contains a non-trivial orientation preserving (in $C$) simple closed curve $\gamma$ disjoint from the disc $D$.
As done above, we can construct a sphere $S^2$ intersecting $\Sigma$ along $\gamma$.
Cutting up $S^3$ by this sphere, we obtain two balls (say $B_1$ and $B_2$) both of which contain some components of $S(\Sigma)$.
Note that the disc $D$ is wholly contained either in $B_1$ or in $B_2$ (we can assume in $B_1$).
Consider now $\Sigma_2 = \Sigma \cap B_2$.
If we fill up $\Sigma_2$ with a disc (say $D_2$) by gluing it along $\gamma$, we obtain a minimal quasi-filling Dehn surface $\Sigma_2'$ of $S^3$ such that $S(\Sigma_2')$ has less than $n$ components.
We can apply the inductive hypothesis and we have that $\Sigma_2'$ is derived from a sphere $S^2$ by means of bubble-moves not involving $D_2$.
Since these moves do not involve the disc $D_2$, we can repeat these moves on $\Sigma$ obtaining a minimal quasi-filling Dehn surface $\Sigma_1'$ of $S^3$ such that $S(\Sigma_1')$ has less than $n$ components.
Note that all moves do not involve the disc $D$.
By applying again the inductive hypothesis, we obtain that $\Sigma_1'$ is derived from a sphere $S^2$ by means of bubble-moves not involving $D$.
Summing up $\Sigma$ is derived from a sphere $S^2$ by means of bubble-moves not involving $D$.
This concludes the proof.
\end{proof}

We are now able to prove the theorem that tells us how to construct all minimal quasi-filling Dehn surfaces starting from the filling ones (except for $S^3$, $\RP^3$ and $\Lfourone$, for which non-filling ones must be used).
\begin{teo}
Let $\Sigma$ be a minimal quasi-filling Dehn surface of a (connected and closed) \ptwoirred\ 3-manifold $M$.
\begin{itemize}
\item
If $sc(M)=0$, one of the following holds:
\begin{itemize}
\item
$M$ is the sphere $S^3$ and $\Sigma$ is derived from the sphere $S^2$,
\item
$M$ is the projective space $\RP^3$ and $\Sigma$ is derived from the projective plane $\RP^2$ or from the double projective plane $\twoRPtwo$,
\item
$M$ is the lens space $\Lfourone$ and $\Sigma$ is derived from the four-hat.
\end{itemize}
\item
If $sc(M)>0$, then $\Sigma$ is derived from a minimal filling Dehn surface of $M$.
\end{itemize}
\end{teo}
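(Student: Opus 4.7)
The plan is to prove the statement by induction on a complexity measure of $\Sigma$ — for instance the lexicographic pair formed by the number of non-disc components of $\Sigma\setminus S(\Sigma)$ and the number of circles in $S(\Sigma)\setminus T(\Sigma)$ — showing that unless $\Sigma$ is already one of the Dehn surfaces named in the statement one can identify a ``last'' bubble, undo it, and reduce to a strictly simpler minimal quasi-filling Dehn surface of $M$ from which $\Sigma$ is derived. Lemma~\ref{lem:all_minimal_filling_sphere} is the engine that lets us recognise the local structure of $\Sigma$ inside a 3-ball as a tower of bubbles built on a sphere, so the whole proof is a bootstrap from that lemma.

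The inductive step uses the construction from the proof of Theorem~\ref{teo:minimal_filling}. Assuming $\Sigma\setminus S(\Sigma)$ has a non-disc component $C$, pick in it a non-trivial orientation preserving simple closed curve $\gamma$, build the strip $A\subset\calR(\Sigma)$ with $A\cap\Sigma=\gamma$, cap $\partial A$ off $\Sigma$ by two discs to form a sphere $\Pi$ (the M\"obius case is ruled out because $M$ is \ptwoirred), and let $B$ be the ball bounded by $\Pi$. The new step is to regard $\Sigma_B=(\Sigma\cap B)\cup\Delta_1$, where $\Delta_1$ is one of the two discs of $\partial B$ cobounded by $\gamma$, as a quasi-filling Dehn surface of the auxiliary $S^3$ obtained by capping $\partial B$ with a second ball. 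Minimality of $\Sigma$ forces $\Sigma_B$ to have no triple points: otherwise, since $sc(S^3)=0$, one could find a simpler quasi-filling Dehn surface of $S^3$ containing $\Delta_1$, transplant its $\Delta_1$-complement back into $B$ in place of $\Sigma\cap B$, and contradict minimality of $\Sigma$. Hence $\Sigma_B$ is minimal in $S^3$, and Lemma~\ref{lem:all_minimal_filling_sphere} applied with $D=\Delta_1$ exhibits it as derived from a sphere $S^2\supset\Delta_1$ via bubble-moves that avoid $\Delta_1$; performing the same moves inside $B\subset M$ shows that $\Sigma$ is derived from $\overline{\Sigma}=(\Sigma\setminus(\Sigma\cap B))\cup\Delta_1'$, where $\Delta_1'=S^2\setminus\Delta_1$ is a disc in $B$ with $\partial\Delta_1'=\gamma$. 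The surface $\overline{\Sigma}$ is still minimal and has strictly smaller complexity measure (by the Euler-characteristic bookkeeping of Theorem~\ref{teo:minimal_filling}), so the inductive hypothesis finishes the step.

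The base case is when every region of $\Sigma\setminus S(\Sigma)$ is a disc. If $sc(M)>0$, the argument at the end of the proof of Theorem~\ref{teo:minimal_filling} immediately gives that $\Sigma$ is filling. If $sc(M)=0$, then $T(\Sigma)=\emptyset$ and $S(\Sigma)$ is a disjoint union of circles; the germ analysis along a single component classifies the ``atomic'' shapes as $S^2$ or $\RP^2$ when $S(\Sigma)=\emptyset$ (via Remark~\ref{rem:Sigma_surf}), or as the two-sphere configuration in $S^3$, $\twoRPtwo$ in $\RP^3$, and the four-hat in $\Lfourone$ when $S(\Sigma)$ is a single circle; any additional circle is an innermost bubble that can be detected and stripped off by the same $\Sigma_B$-construction applied to a small regular neighbourhood around it, so induction finishes this case too.

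The main obstacle I expect lies in the bookkeeping needed to check that $\Sigma_B$ is genuinely quasi-filling in the auxiliary $S^3$, that is, that every component of $B\setminus\Sigma_B$ is a ball; this requires tracking how the ball components of $M\setminus\Sigma$ meet $\partial B$ and get cut by the auxiliary disc $\Delta_1$, accounting for the fact that a single ball of $M\setminus\Sigma$ may straddle $\partial B$. A secondary subtlety is faithfully translating the abstract derivation of $\Sigma_B$ from $S^2$ supplied by Lemma~\ref{lem:all_minimal_filling_sphere} into an honest sequence of bubble-moves performed inside $M$, so that $\Sigma$ is truly derived from $\overline{\Sigma}$ in the sense defined in the paper.
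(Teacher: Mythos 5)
Your proposal is correct and follows essentially the same route as the paper: in the inductive (iterative) step you locate a non-disc region, compress along a non-trivial orientation-preserving curve $\gamma$, cap off to obtain a separating sphere $\Pi = \partial B$, view $\Sigma\cap B$ capped by a disc as a quasi-filling Dehn surface of an auxiliary $S^3$, observe that minimality of $\Sigma$ forces this auxiliary surface to have no triple points, invoke Lemma~\ref{lem:all_minimal_filling_sphere} to realize it as a tower of bubbles over $S^2$ that avoid the capping disc, and transfer those bubble-moves back into $M$ to exhibit $\Sigma$ as derived from a strictly simpler minimal quasi-filling Dehn surface; the base case is then handled exactly as in the final part of the proof of Theorem~\ref{teo:minimal_filling}. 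The only genuine differences are cosmetic: you make explicit the induction that the paper leaves as ``by repeatedly applying this procedure,'' and you phrase the ``no triple points in $B$'' step via a transplant through $S^3$, whereas the direct observation (replacing $\Sigma\cap B$ by a disc would strictly reduce $|T(\Sigma)|$, contradicting minimality) is slightly cleaner. One small inefficiency: once every region of $\Sigma\setminus S(\Sigma)$ is a disc, connectedness of $\Sigma$ already forces $S(\Sigma)$ to be connected, so the ``additional innermost circle'' scenario you discuss in the base case cannot occur and the secondary coordinate of your lexicographic measure is unnecessary; similarly the first coordinate (number of non-disc components) does not strictly decrease at each step, so the termination argument is really driven by the increase in Euler characteristic of the component containing $\gamma$, as the paper notes.
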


\begin{proof}
The scheme of the proof is the same as that of Theorem~\ref{teo:minimal_filling}.
Hence, we will often refer to the proof of Theorem~\ref{teo:minimal_filling} also for notation.

Let $\Sigma$ be a minimal quasi-filling Dehn surface of $M$.
If we have $S(\Sigma)=\emptyset$ ({\em i.e.}~$\Sigma$ is a surface), by virtue of Remark~\ref{rem:Sigma_surf}, we have that $\Sigma$ is the sphere $S^2$ or the projective plane $\RP^2$, and that $M$ is the sphere $S^3$ or the projective space $\RP^3$, respectively.

Then, we suppose $S(\Sigma)\neq\emptyset$.
We will first prove that $\Sigma$ is derived from a (minimal) quasi-filling Dehn surface $\Sigma'$ of $M$ such that either $\Sigma' \setminus S(\Sigma')$ is made up of discs or $\Sigma'$ is a surface.
In fact, suppose there exists a component $C$ of $\Sigma \setminus S(\Sigma)$ that is not a disc.
$C$ contains a non-trivial orientation preserving (in $C$) simple closed curve $\gamma$.
As done in the proof of Theorem~\ref{teo:minimal_filling}, we can construct a sphere $S^2$ contained in $M$ such that $S^2\cap\Sigma=\gamma$.
Since $M$ is \ptwoirred, the sphere found bounds a ball, say $B$.
Consider now $\Sigma_1 = \Sigma \setminus B$ and $\Sigma_2 = \Sigma \cap B$.
If we fill up $\Sigma_1$ with a disc by gluing it along $\gamma$, we obtain a minimal quasi-filling Dehn surface $\Sigma_1'$ of $M$.
Analogously, if we fill up $\Sigma_2$ with a disc (say $D$) by gluing it along $\gamma$, we obtain a minimal quasi-filling Dehn surface $\Sigma_2'$ of $S^3$.
By virtue of Lemma~\ref{lem:all_minimal_filling_sphere}, $\Sigma_2'$ is derived from a sphere $S^2$ by means of bubble-moves not involving $D$.
These moves can by applied to $\Sigma_1'$ because they do not involve $D$.
Note that the Euler characteristic of the component of $\Sigma \setminus S(\Sigma)$ containing $\gamma$ has increased, that no new non-disc component has been created and that the number of triple points has not changed.
Hence, by repeatedly applying this procedure, we eventually get a (minimal) quasi-filling Dehn surface $\Sigma'$ of $M$ from which $\Sigma$ is derived and such that either $\Sigma' \setminus S(\Sigma')$ is made up of discs or $\Sigma'$ is a surface.

If $\Sigma'$ is a surface, by virtue of Remark~\ref{rem:Sigma_surf}, $\Sigma'$ is either $S^2$ or $\RP^2$, and $M$ is $S^3$ or $\RP^3$, respectively; therefore, we have done.
Then, we suppose that $\Sigma' \setminus S(\Sigma')$ is made up of discs.
Since $\Sigma'$ is connected, we have that $S(\Sigma')$ is also connected.
If we have $sc(M)>0$ ({\em i.e.}~$T(\Sigma')$ is not empty), $S(\Sigma) \setminus T(\Sigma)$ cannot contain circles and hence $\Sigma'$ is filling ({\em i.e.}~$\Sigma$ is derived from a minimal filling Dehn surface of $M$).
Otherwise, if we have $sc(M)=0$ ({\em i.e.}~$T(\Sigma')$ is empty), $S(\Sigma)$ is made up of one circle.
Since $\Sigma' \setminus S(\Sigma')$ is made up of discs, the Dehn surface $\Sigma'$ is completely determined by the regular neighbourhood of $S(\Sigma')$ in $\Sigma'$.
This neighbourhood depends on how the germs of disc are interchanged along the curve $S(\Sigma')$.
Among all possibilities we must rule out those not yielding a quasi-filling Dehn surface, hence only three ones must be taken into account (up to symmetry):
\begin{itemize}
\item two spheres intersecting along the circle $S(\Sigma')$ which form a Dehn surface of $S^3$; \item the double projective plane $\twoRPtwo$ which is a Dehn surface of $\RP^3$;
\item the four-hat which is a Dehn surface of $\Lfourone$.
\end{itemize}
Note that in the first case $\Sigma'$ is derived from the sphere $S^2$.
Therefore, $\Sigma$ is derived from $S^2$, $\twoRPtwo$ or the four hat.
The proof is complete.
\end{proof}

\section{Cubulations}

A {\em cubulation} of $M$ is a cell-decomposition of $M$ such that
\begin{itemize}
\item each 2-cell (called {\em face}) is glued along 4 edges,
\item each 3-cell (called {\em cube}) is glued along 6 faces arranged like the
  boundary of a cube.
\end{itemize}
Note that self-adjacencies and multiple adjacencies are allowed.
In Fig.~\ref{fig:cubul_example} we have shown a cubulation of the 3-dimensional torus
$S^1\times S^1\times S^1$ with two cubes (the identification of each pair of
faces is the obvious one, {\em i.e.}~the one without twists).
\begin{figure}[t]
  \centerline{\includegraphics{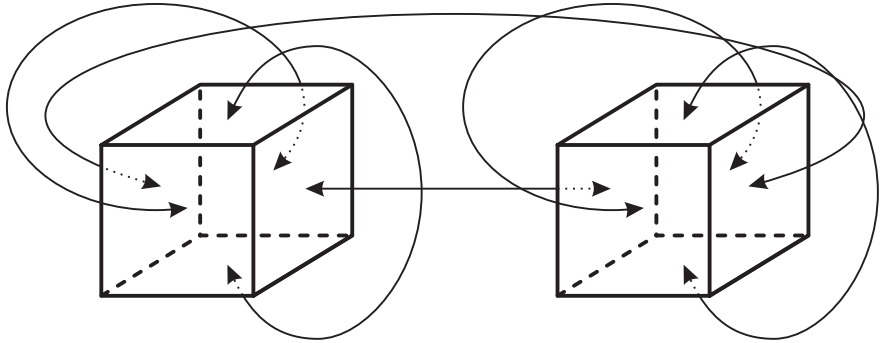}}
  \caption{A cubulation of the 3-dimensional torus $S^1\times S^1\times S^1$
    with two cubes (the identification of each pair of faces is the obvious
    one, {\em i.e.}~the one without twists).}
  \label{fig:cubul_example}
\end{figure}

The following construction is well-known (see~\cite{Aitchison-Matsumotoi-Rubinstein, Funar, Babson-Chan}, for instance).
Let $\calC$ be a cubulation of a closed 3-manifold; consider, for each cube of
$\calC$, the three squares shown in Fig.~\ref{fig:cube_to_surf};
the subset of $M$ obtained by gluing together
all these squares is a filling Dehn surface $\Sigma$ of $M$ (up to isotopy, we can suppose that the squares fit together through the faces).
\begin{figure}[t]
  \centerline{\includegraphics{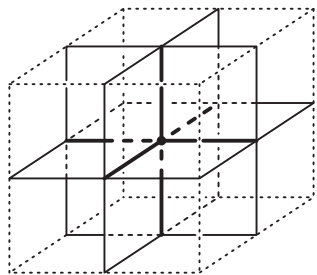}}
  \caption{Local behaviour of duality.}
  \label{fig:cube_to_surf}
\end{figure}
Conversely, a cell-decomposition $\calC$ can be constructed from a filling
Dehn surface $\Sigma$ of $M$ by considering an abstract cube for each triple
point of $\Sigma$ and by gluing the cubes together along the faces (the identification of each pair of faces is chosen by following the four germs of regions adjacent
to the respective edge of $\Sigma$); the cell-decomposition $\calC$ just constructed is indeed a cubulation of $M$.
The cubulation and the filling Dehn surface constructed in such a way are said
to be {\em dual} to each other.

An obvious corollary of Theorem~\ref{teo:minimal_filling} is the following result.

\begin{cor}
The surface-complexity of a (connected and closed) \ptwoirred\ 3-manifold, different from 
$S^3$, $\RP^3$ and $\Lfourone$, is equal to the minimal number of cubes in a cubulation of $M$.
\end{cor}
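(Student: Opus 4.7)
The plan is to establish the equality by a two-sided inequality, using the duality between filling Dehn surfaces and cubulations described just before the statement, together with Theorem~\ref{teo:minimal_filling}. Let $\kappa(M)$ denote the minimal number of cubes in a cubulation of $M$; the goal is to show $sc(M)=\kappa(M)$ for every \ptwoirred\ $M\notin\{S^3,\RP^3,\Lfourone\}$.

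For the inequality $sc(M)\leqslant\kappa(M)$, I would start from an arbitrary cubulation $\calC$ of $M$ with $\kappa(M)$ cubes and apply the dual construction of the section: for each cube, insert the three mutually perpendicular midsquares of Fig.~\ref{fig:cube_to_surf}, and glue them across faces. The resulting subset is a filling Dehn surface $\Sigma$ of $M$, whose triple points correspond bijectively to the cubes of $\calC$, so $|T(\Sigma)|=\kappa(M)$. Since a filling Dehn surface is in particular quasi-filling, the definition of surface-complexity gives $sc(M)\leqslant|T(\Sigma)|=\kappa(M)$. Notice that this direction does not use the hypothesis on $M$ at all.

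For the reverse inequality $\kappa(M)\leqslant sc(M)$, I would invoke Theorem~\ref{teo:minimal_filling}. Since $M$ is \ptwoirred\ and distinct from $S^3$, $\RP^3$ and $\Lfourone$, Theorem~\ref{teo:minimal_filling} forces $sc(M)>0$ and furnishes a \emph{minimal filling} Dehn surface $\Sigma$ of $M$ with $|T(\Sigma)|=sc(M)$. Applying the inverse dual construction (take one abstract cube per triple point and glue along faces using the germs of regions adjacent to each edge of $\Sigma$) produces a cubulation $\calC$ of $M$ whose cubes are in bijection with $T(\Sigma)$. Hence $\kappa(M)\leqslant|\{\text{cubes of }\calC\}|=|T(\Sigma)|=sc(M)$.

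The only delicate point is the existence of the minimal filling Dehn surface, which is precisely where the excluded manifolds matter: for $S^3$, $\RP^3$ and $\Lfourone$ the minimal quasi-filling Dehn surface need not be filling (and in fact has no triple points), so the dual cubulation construction cannot be applied. With Theorem~\ref{teo:minimal_filling} already established, this obstacle is cleanly side-stepped by the hypothesis, and the corollary follows by combining the two inequalities above.
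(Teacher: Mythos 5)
Your proof is correct and takes essentially the same approach as the paper, which simply asserts the corollary is ``obvious'' after describing the cube-to-surface duality; you have merely spelled out the two inequalities that the paper leaves implicit. In particular, your identification of Theorem~\ref{teo:minimal_filling} as the ingredient that both forces $sc(M)>0$ and supplies a minimal \emph{filling} Dehn surface (rather than merely quasi-filling) is exactly the intended use of that theorem.
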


\section{Subadditivity}\label{sec:subadditivity}

An important feature of a complexity function is to behave well with respect to the
cut-and-paste operations.
In this section, we will prove that the surface-complexity is subadditive under connected sum.
We do not know whether it is indeed additive.

\begin{teo}\label{teo:sub_additivity}
The complexity of the connected sum of (connected and closed) 3-manifolds is less than or equal to the sum of their complexities.
\end{teo}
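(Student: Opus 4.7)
The plan is to construct, from minimal quasi-filling Dehn surfaces $\Sigma_i$ of $M_i$ (realising $|T(\Sigma_i)|=sc(M_i)$), a quasi-filling Dehn surface $\Sigma$ of $M_1\#M_2$ with $|T(\Sigma)| = |T(\Sigma_1)| + |T(\Sigma_2)|$; the inequality then follows immediately from the definition of $sc$. The construction is the natural Dehn-surface analogue of a connected sum at a regular point. I would pick a simple point $p_i \in \Sigma_i \setminus S(\Sigma_i)$ (which exists since $S(\Sigma_i)$ is at most one-dimensional) and a small closed ball $B_i \subset M_i$ around $p_i$, transverse to $\Sigma_i$ and disjoint from $S(\Sigma_i)$, so $B_i \cap \Sigma_i$ is a disc $D_i$ and $\partial B_i \cap \Sigma_i = \partial D_i$. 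I would then realise $M_1 \# M_2 = (M_1 \setminus \mathring{B}_1) \cup_\phi (M_2 \setminus \mathring{B}_2)$ using a homeomorphism $\phi\co\partial B_1 \to \partial B_2$ that carries $\partial D_1$ to $\partial D_2$; the punctured surfaces $\Sigma_i \setminus \mathring{D}_i$ then glue along $\partial D_1 \sim \partial D_2$ into a closed immersed surface $\Sigma \subset M_1 \# M_2$, and because the identification takes place along a circle of simple points of the $\Sigma_i$, no new triple points are created.

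The technical core of the proof is checking that $\Sigma$ is quasi-filling. Components of $M_i \setminus \Sigma_i$ disjoint from $B_i$ persist as unchanged open balls in $(M_1 \# M_2) \setminus \Sigma$. Writing $V_i^+, V_i^-$ for the (possibly coinciding) components of $M_i \setminus \Sigma_i$ adjacent to $p_i$, the removal of $\mathring{B}_i$ bites a half-ball out of each, and the hemispheres of $\partial B_1 \sim \partial B_2$ then glue the bitten $V_1^\pm$ to the bitten $V_2^\pm$. A short case analysis, built on the fact that two balls glued along a disc in their boundaries is again a ball, shows that the resulting neck region is a disjoint union of open balls as long as at least one of the identifications $V_1^+ = V_1^-$ or $V_2^+ = V_2^-$ fails.

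The main obstacle will be the fully degenerate case in which both $V_1^+ = V_1^-$ and $V_2^+ = V_2^-$, so that the neck is a solid torus rather than balls. I would resolve this by enlarging $\Sigma$ with an auxiliary $2$-sphere obtained as the union of a meridional disc of the solid-torus neck and a properly embedded disc sharing the same meridian boundary in an adjacent component of the complement. Since the meridian can be chosen generically on the boundary torus of the neck to avoid the zero-dimensional set where it meets $S(\Sigma)$, the enlargement introduces no new triple points while converting the solid-torus neck into balls, giving a quasi-filling Dehn surface of $M_1 \# M_2$ with $sc(M_1) + sc(M_2)$ triple points as required.
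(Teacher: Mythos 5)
Your construction is genuinely different from the paper's: you perform the connected sum at simple points of the $\Sigma_i$ and glue the punctured surfaces directly, whereas the paper removes balls \emph{disjoint} from the $\Sigma_i$ and then repairs the resulting $S^2\times[0,1]$ component of the complement by pushing a finger of $\Sigma_1$ through it into a ball adjacent to $\Sigma_2$ (no new triple points, one new double circle). Your generic case analysis is correct, and you rightly isolate the degenerate situation $V_1^+=V_1^-$ and $V_2^+=V_2^-$ as the only obstruction.

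The gap is in how you repair that degenerate case. When both identifications hold, $\Sigma$ is locally one-sided along the gluing circle, and in fact \emph{both} local sides of $\Sigma$ near any meridian curve on $\partial N$ face the solid-torus neck $N$ itself. Concretely: take $M_1=M_2=\RP^3$ and $\Sigma_i=\RP^2$ (a legitimate minimal choice, $sc(\RP^3)=0$). Then $V_i^+=V_i^-$ is forced for every simple point of $\Sigma_i$, the glued surface $\Sigma$ is a one-sided Klein bottle in $\RP^3\#\RP^3$, and $N$ is the \emph{entire} complement. There is no ``adjacent component of the complement'' in which to place your capping disc $E'$; your auxiliary sphere would have to lie with both hemispheres inside $N$, and then one must separately argue that both of its lifts to $\partial N$ are meridians of $N$ (otherwise $N$ is not cut into balls). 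As written, the fix does not go through. A cleaner repair in the spirit of your construction: before doing the surgery, apply one bubble-move to $\Sigma_1$ (this adds no triple points and, by design, produces a simple point on the new tiny sphere whose two sides face \emph{different} complementary balls); then perform the connected-sum surgery at that point, so the neck is always a ball and the degenerate case never arises. Alternatively, note that whenever $\Sigma_i$ has every simple point with $V_i^+=V_i^-$, one may replace $\Sigma_i$ by another minimal quasi-filling Dehn surface (e.g.\ $\twoRPtwo$ instead of $\RP^2$ for $\RP^3$) for which a good simple point exists; but the bubble-move argument is uniform and avoids case-checking.
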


\begin{proof}
In order to prove the theorem, it is enough to prove the statement in the case where the number of the manifolds involved in the connected sum is two.
Hence, if we call $M_1$ and $M_2$ the two manifolds, we need to prove that
$sc(M_1\# M_2) \leqslant sc(M_1) + sc(M_2)$.
Let $\Sigma_1$ (resp.~$\Sigma_2$) be a quasi-filling Dehn surface of $M_1$ (resp.~$M_2$) with $sc(M_1)$ (resp.~$sc(M_2)$) triple points.
If the balls we remove to obtain the connected sum are disjoint from the $\Sigma_i$'s, we can suppose that $\Sigma_1$ and $\Sigma_2$ are embedded also in the connected sum $M_1\# M_2$.
All the components of $(M_1\# M_2) \setminus (\Sigma_1 \cup \Sigma_2)$ are balls except one that is a product $S^2 \times [0,1]$; see Fig.~\ref{fig:connected_sum}-left.
\begin{figure}[t]
  \centerline{\includegraphics{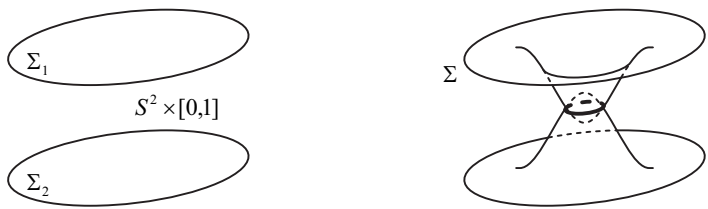}}
  \caption{The Dehn surface $\Sigma_1 \cup \Sigma_2$ in $M_1\# M_2$ (left) and its modification $\Sigma$ being quasi-filling (right).}
  \label{fig:connected_sum}
\end{figure}
We modify $\Sigma_1 \cup \Sigma_2$ as shown in Fig.~\ref{fig:connected_sum}-right, getting a Dehn surface, say $\Sigma$.
The complement $(M_1\# M_2) \setminus \Sigma$ is made up of the same balls as before (up to isotopy), a new small ball and a product $D^2 \times [0,1]$ (which is indeed a ball).
Therefore, $\Sigma$ is a quasi-filling Dehn surface of $M_1\# M_2$.
Since $\Sigma$ has $sc(M_1)+sc(M_2)$ triple points, we have $sc(M_1\# M_2) \leqslant sc(M_1) + sc(M_2)$.
\end{proof}

\section{Estimations}

\subsection{Matveev complexity}\label{subsec:Matveev_complexity}

The Matveev complexity~\cite{Matveev:compl} of a closed 3-manifold $M$ is defined using simple spines.
A polyhedron $P$ is {\em simple} if the link of each point of $P$ can be embedded in the 1-skeleton of the tetrahedron.
The points of $P$ whose link is the whole 1-skeleton of the tetrahedron are called {\em vertices}.
A sub-polyhedron $P$ of $M$ is a {\em spine} of $M$ if $M \setminus P$ is a ball.
The {\em Matveev complexity} $c(M)$ of $M$ is the minimal number of vertices of a simple spine of $M$.
The Matveev complexity is a natural measure of how much the manifold is complicated, because if $M$ is \ptwoirred\ and different from the sphere $S^3$, the projective space $\RP^3$ and the Lens space $\Lthreeone$, then its Matveev complexity is the minimum number of tetrahedra in a triangulation of it (the Matveev complexity of $S^3$, $\RP^3$ and $\Lthreeone$ is zero).
A simple spine of $M$ is {\em standard} if it is purely 2-dimensional and its singularities induce a cell-decomposition of $M$.
The dual cellularization of a standard spine of $M$ is a one-vertex triangulation of $M$, see~\cite{Matveev:book}.

The Matveev complexity is related to the surface-complexity.
Before describing more precisely this relation, we describe two constructions allowing us to create standard spines (or one-vertex triangulations, by duality) from filling Dehn surfaces (or cubulations, by duality), and {\em vice versa}.

Let $\calT$ be a one-vertex triangulation of $M$.
Consider, for each tetrahedron of $\calT$, the four triangles shown in Fig.~\ref{fig:tria_to_surf}.
\begin{figure}[t]
  \centerline{\includegraphics{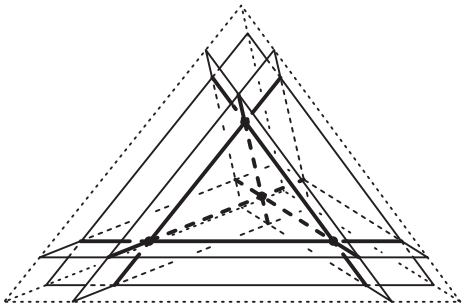}}
  \caption{Construction of a nullhomotopic filling Dehn sphere from a one-vertex triangulation.}
  \label{fig:tria_to_surf}
\end{figure}
The subset of $M$ obtained by gluing together all these triangles is a Dehn surface $\Sigma$ of $M$ with $4c(M)$ triple points (up to isotopy, we can suppose that the triangles fit together through the faces).
It is very easy to prove that $\Sigma$ is filling, so we leave it to the reader.
The construction just described is the dual counterpart of the well-known construction consisting in dividing a tetrahedron into four cubes~\cite{Shtanko-Shtogrin, Dolbilin-Shtanko-Shtogrin, Funar}.

Conversely, let $\calC$ be a cubulation of $M$.
Consider, for each cube of $\calT$, the five tetrahedra shown in Fig.~\ref{fig:cube_to_tetra}.
\begin{figure}[t]
  \centerline{\includegraphics{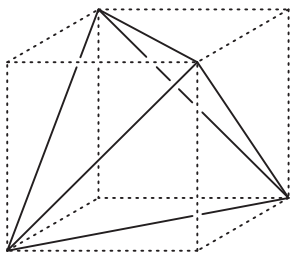}}
  \caption{Construction of a triangulation from a cubulation.}
  \label{fig:cube_to_tetra}
\end{figure}
The idea is to glue together these ``bricks'' (each of which is made up of five tetrahedra) by following the identifications of the faces of $\calC$.
Note that the faces of the cubes are divided by diagonals into two triangles and that it may occur that these pairs of triangles do not match each other.
If they do not match each other, we insert a tetrahedron between them as shown in Fig.~\ref{fig:insert_tetra}.
\begin{figure}[t]
  \centerline{\includegraphics{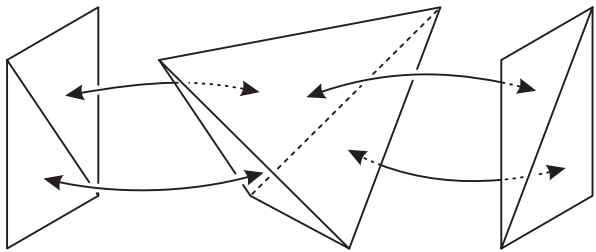}}
  \caption{Inserting a tetrahedron between two pairs of triangles not matching each other.}
  \label{fig:insert_tetra}
\end{figure}
Eventually, we get a triangulation $\calT$ of $M$ with $5$ tetrahedra for each cube of $\calC$ and at most one tetrahedron for each face of $\calC$.
Since the number of faces of a cubulation is thrice the number of cubes, we have that the triangulation $\calT$ we have constructed has at most $8sc(M)$ tetrahedra.

We note that there are two different identifications of the abstract ``brick'' with each cube, so there are $2^{sc(M)}$ possibilities for the identifications with the cubes of $\calC$.
Some of them may need less insertions of tetrahedra (for matching the pairs of triangles in the faces of $\calC$) than others.
Hence, optimal choices may lead to a triangulation of $M$ with $5sc(M)$ tetrahedra or few more.

The two constructions above and the list of the (connected and closed) \ptwoirred\ 3-manifolds with $c=0$ or $sc=0$ obviously imply the following.

\begin{teo}\label{teo:esimation_matveev-surface}
Let $M$ be a (connected and closed) \ptwoirred\ 3-manifold different from $\Lthreeone$ and $\Lfourone$; then we have
$$
sc(M) \leqslant 4c(M)
\quad \mbox{and} \quad
c(M) \leqslant 8sc(M).
$$
Moreover, we have $c(\Lthreeone)=0$, $sc(\Lthreeone)>0$, $c(\Lfourone)>0$ and $sc(\Lfourone)=0$.
\end{teo}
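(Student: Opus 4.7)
The plan is to assemble Theorem~\ref{teo:esimation_matveev-surface} from the two explicit constructions already described in Section~\ref{subsec:Matveev_complexity} together with Theorem~\ref{teo:minimal_filling} and Matveev's classification of manifolds of complexity zero. Nothing substantially new has to be proved; the work is in matching the right hypotheses to the right construction and in treating the degenerate cases.

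For the inequality $sc(M)\leqslant 4c(M)$: if $M$ is \ptwoirred\ and different from $S^3$, $\RP^3$ and $\Lthreeone$, then by Matveev's naturalness result $c(M)$ equals the minimal number of tetrahedra in a triangulation of $M$, and a minimizing triangulation can be taken to be one-vertex. Apply the construction of Fig.~\ref{fig:tria_to_surf} to this triangulation. This produces a filling Dehn surface of $M$ with exactly $4c(M)$ triple points, which is in particular quasi-filling, so $sc(M)\leqslant 4c(M)$. In the remaining admissible cases $M=S^3$ or $M=\RP^3$ (we have excluded $\Lthreeone$), both $c(M)$ and $sc(M)$ vanish, so the inequality is trivial.

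For the inequality $c(M)\leqslant 8sc(M)$: if $M$ is \ptwoirred\ and different from $S^3$, $\RP^3$ and $\Lfourone$, then by Theorem~\ref{teo:minimal_filling} $M$ admits a minimal filling Dehn surface with $sc(M)$ triple points, and the corollary following that theorem together with the duality of Section~3 gives a cubulation of $M$ with exactly $sc(M)$ cubes. Apply the construction of Figs.~\ref{fig:cube_to_tetra} and~\ref{fig:insert_tetra} to this cubulation to obtain a triangulation of $M$ with at most $5sc(M)+3sc(M)=8sc(M)$ tetrahedra (five tetrahedra per cube and at most one per face, together with the fact that there are $3sc(M)$ faces). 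The dual 2-skeleton of this triangulation is a simple spine of $M$ with one vertex per tetrahedron, hence $c(M)\leqslant 8sc(M)$. For the excluded cases $M=S^3$ or $M=\RP^3$ both sides are zero.

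It remains to verify the four assertions about $\Lthreeone$ and $\Lfourone$. The equality $c(\Lthreeone)=0$ and the inequality $c(\Lfourone)>0$ are Matveev's classification of closed \ptwoirred\ 3-manifolds of complexity zero. The inequality $sc(\Lthreeone)>0$ follows from Theorem~\ref{teo:minimal_filling}: the only \ptwoirred\ closed 3-manifolds with $sc=0$ are $S^3$, $\RP^3$ and $\Lfourone$, and $\Lthreeone$ is not on this list. Finally $sc(\Lfourone)=0$ was already observed in Section~1, where the quadruple hat was exhibited as a quasi-filling Dehn surface of $\Lfourone$ without triple points. The only step that might look subtle is the bookkeeping behind $8sc(M)$, which is already carried out in the text preceding the theorem, so I expect no real obstacle in writing up the argument.
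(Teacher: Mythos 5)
Your proposal is correct and follows the same route the paper takes: it is simply the two constructions of Section~\ref{subsec:Matveev_complexity} (triangulation~$\to$~filling Dehn surface, cubulation~$\to$~triangulation) together with Theorem~\ref{teo:minimal_filling} and Matveev's list of $\mathbb{P}^2$-irreducible manifolds of complexity zero, with the degenerate cases $S^3$, $\RP^3$, $\Lthreeone$, $\Lfourone$ handled separately. The only step worth tightening is the sentence ``the dual 2-skeleton of this triangulation is a simple spine of $M$ with one vertex per tetrahedron.'' The triangulation built from a cubulation is generally \emph{not} one-vertex, so its dual 2-skeleton, while a simple polyhedron with one true vertex per tetrahedron, has as complement in $M$ several balls (one per vertex of the triangulation) rather than one, and hence is not literally a spine. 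What you actually need is the standard estimate that $c(M)$ is bounded above by the number of tetrahedra in \emph{any} triangulation of the closed manifold $M$ (obtained, e.g., by puncturing 2-cells of the dual 2-skeleton to merge the complementary balls without creating new true vertices, or by first converting to a one-vertex triangulation). With that correction the conclusion $c(M)\leqslant 8sc(M)$ stands, and the rest of the proposal is fine.
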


\subsection{Other estimations}

In general, calculating the surface-complexity $sc(M)$ of $M$ is very difficult, however it is relatively easy to estimate it.
More precisely, it is quite easy to give upper bounds for it.
If we construct a quasi-filling Dehn surface $\Sigma$ of $M$, the number of triple points of $\Sigma$ is an upper bound for the surface-complexity of $M$.
Afterwards, the (usually difficult) problem of proving the sharpness of this bound arises.

We can construct quasi-filling Dehn surfaces of $M$ from many presentations of $M$ and hence we obtain estimates from many presentations of $M$.
We use here three presentations: triangulations, Heegaard splittings and Dehn surgery.

We have already constructed a filling Dehn surface of $M$ from a one-vertex triangulation of $M$ in Section~\ref{subsec:Matveev_complexity}.
The same construction applies to any triangulation of $M$, yielding the following result.
\begin{teo}
Suppose a closed 3-manifold $M$ has a triangulation with $n$ tetrahedra. Then, we have $sc(M) \leqslant 4n$.
\end{teo}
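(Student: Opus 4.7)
The plan is to repeat verbatim the construction carried out for one-vertex triangulations in Section~\ref{subsec:Matveev_complexity}. That construction is purely local in each tetrahedron, so it does not actually use the hypothesis that the triangulation has a single vertex; we just have to check that the resulting object is still a quasi-filling Dehn surface of $M$ and count its triple points.

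Concretely, given a triangulation $\calT$ of $M$ with $n$ tetrahedra, for every tetrahedron I insert the four triangles depicted in Fig.~\ref{fig:tria_to_surf} (the same ones as before). Because the construction inside a tetrahedron depends only on the combinatorics of the tetrahedron, for any pair of adjacent tetrahedra the traces of the triangles on their common face agree (up to isotopy), so after a small isotopy the triangles in different tetrahedra fit together across the 2-faces of $\calT$ to yield a subset $\Sigma \subset M$. A local inspection of Fig.~\ref{fig:tria_to_surf} shows that $\Sigma$ is the image of a transverse immersion of a closed surface, hence a Dehn surface, and that it has exactly four triple points inside each tetrahedron (one for each unordered triple of the four triangles), giving $|T(\Sigma)|=4n$.

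It remains to verify that $\Sigma$ is quasi-filling, i.e.\ that $M\setminus\Sigma$ is a disjoint union of open balls. Inside each tetrahedron $\tau$, the four triangles cut $\tau$ into one central open ball and four open pieces, one at each vertex of $\tau$, each piece being an open cone on a disc and hence an open ball. The central balls are already balls of $M\setminus\Sigma$. The corner pieces sitting around a common vertex $v$ of $\calT$ glue together across the 2-faces of $\calT$ incident to $v$, yielding an open regular neighbourhood of $v$ in $M$, which is again an open ball. Thus every component of $M\setminus\Sigma$ is a ball, so $\Sigma$ is quasi-filling, and $sc(M)\leqslant |T(\Sigma)|=4n$.

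The only step that really requires care is the last one, the ball-count of $M\setminus\Sigma$ in the multi-vertex case: in the one-vertex case considered in Section~\ref{subsec:Matveev_complexity} the vertex corners glue into a single ball almost tautologically, while here one must check separately, for each vertex $v$ of $\calT$, that the corners around $v$ assemble into a ball (namely an open star of $v$). Once this is observed, nothing else has to change with respect to the one-vertex argument, and the inequality follows immediately.
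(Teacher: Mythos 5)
Your construction is the same as the paper's, and the paper indeed just states the construction and leaves the verification to the reader, so filling in the verification is the right thing to do. However, the key step where you count the components of $\tau\setminus\Sigma$ is wrong, and this leaves a genuine gap. You correctly observe that there are four triple points in the interior of each tetrahedron $\tau$; but that is incompatible with the claim that the four triangles cut $\tau$ into only five pieces (``one central open ball and four open pieces, one at each vertex''). If the four triangles never crossed one another in the interior there would be no interior triple points at all. Since they do cross pairwise and produce $\binom{4}{3}=4$ interior triple points, they sit essentially like four affine planes in general position in $\mathbb{R}^3$ and cut $\tau$ into $\binom{4}{0}+\binom{4}{1}+\binom{4}{2}+\binom{4}{3}=15$ pieces, not $5$. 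Dually, these pieces are the intersections with $\tau$ of the open stars of the vertices of the cubulation of $M$ obtained by dividing each tetrahedron into four cubes: one piece around the barycenter of $\tau$, four around the face-barycenters, six around the edge-midpoints, and four around the vertices of $\tau$.

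Your argument handles only the barycenter piece and the four vertex pieces. You do not account for the four face-barycenter pieces, which glue in pairs across the $2$-faces of $\calT$ to form balls, nor for the six edge-midpoint pieces, which glue cyclically around each edge of $\calT$ to form a ball (using that the link of an edge in a triangulated closed $3$-manifold is a circle). These cases are not difficult, but they must be treated: until they are, the verification that every component of $M\setminus\Sigma$ is a ball is incomplete, and the ``only step that really requires care'' you highlight at the end is in fact not the only one. Once the missing pieces are accounted for, the argument does give $sc(M)\leqslant 4n$, in line with the paper.
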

\begin{proof}
Let $\calT$ be the triangulation of $M$ with $n$ tetrahedra.
Consider, for each tetrahedron of $\calT$, the four triangles shown in Fig.~\ref{fig:tria_to_surf}.
The subset of $M$ obtained by gluing together all these triangles is a Dehn surface $\Sigma$ of $M$ with $4n$ triple points.
It is very easy to prove that $\Sigma$ is filling, so we leave it to the reader.
\end{proof}

\begin{teo}\label{teo:heegaard_estimation}
Suppose $M = H_1 \cup H_2$ is a Heegaard splitting of a closed 3-manifold $M$ such that the meridians of the handlebody $H_1$ intersect those of $H_2$ transversely in $n$ points.
Then, we have $sc(M) \leqslant 4n$.
\end{teo}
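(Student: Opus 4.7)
The plan is to reduce to the triangulation estimate proven just above (a triangulation with $k$ tetrahedra gives $sc(M)\leq 4k$) by building, from the Heegaard data, a triangulation of $M$ with exactly $n$ tetrahedra.

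I would begin by forming the 2-polyhedron $P=\Sigma_g\cup D_1\cup D_2\subset M$, where $D_i$ is the meridian system of $H_i$ and $\Sigma_g=\partial H_1=\partial H_2$. The first step is to check that $P$ is a simple polyhedron in Matveev's sense: away from the meridian curves $\partial D_1\cup\partial D_2$ it is a smooth 2-manifold; along each arc of a meridian curve three sheets meet transversely (two from $\Sigma_g$ and one from the corresponding meridian disc), producing a triple line; and at each of the $n$ transverse intersections of $\partial D_1$ with $\partial D_2$ the link in $P$ is the complete graph $K_4$, i.e.\ the 1-skeleton of a tetrahedron, so each intersection is a true vertex of $P$. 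Hence $P$ has exactly $n$ true vertices. Since cutting each handlebody along its full meridian system produces a 3-ball, $M\setminus P=(H_1\setminus D_1)\sqcup (H_2\setminus D_2)$ is a disjoint union of two open 3-balls.

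The next step is to pass to the dual cell-decomposition of $M$: each complementary ball yields a 0-cell, each 2-stratum of $P$ a 1-cell, each triple-line arc a triangular 2-cell (three sheets abut each triple line), and each true vertex a tetrahedral 3-cell (its link $K_4$ is the 1-skeleton of a tetrahedron). After a standard modification of $P$ ensuring that every 2-stratum is a disc — cutting any non-disc 2-stratum with auxiliary arcs chosen so as not to introduce new true vertices — the dual becomes a genuine (possibly singular) triangulation $\calT$ of $M$ with exactly $n$ tetrahedra and two vertices. Applying the preceding triangulation estimate to $\calT$ yields $sc(M)\leqslant 4n$.

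The main obstacle is the modification step: in general $\partial D_1\cup\partial D_2$ does not cut $\Sigma_g$ into discs, so some 2-strata of $P$ fail to be discs and the raw dual is only a cell complex, not a triangulation. I would deal with this by the standard Matveev-style trick, which keeps the number of true vertices fixed while turning every 2-stratum into a disc. A more hands-on alternative, should that prove cumbersome, is to build the Dehn surface directly: inside a small ball around each true vertex of $P$ insert the local four-triple-point configuration appearing inside a single tetrahedron of the tria-to-surf construction, and glue these local pieces along the triple-line arcs of $P$; the result is a quasi-filling Dehn surface of $M$ with exactly $4n$ triple points, since its complement decomposes into the two balls of $M\setminus P$ together with the small ball-shaped neighbourhoods introduced by each gadget.
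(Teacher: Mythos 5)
Your approach is genuinely different from the paper's. The paper never passes through a triangulation: it takes the Heegaard surface $H=\partial H_1=\partial H_2$, adds for each meridian disc $D_j^i$ the sphere $S_j^i$ bounding a small regular neighbourhood of $D_j^i$ (so $S_j^i$ meets $H$ in two loops parallel to $\mu_j^i$), and checks directly that $H\cup\bigcup S_j^i$ is quasi-filling with $4$ triple points per crossing. Your route instead forms the simple $2$-polyhedron $P=\Sigma_g\cup D_1\cup D_2$, observes (correctly) that it has exactly $n$ true vertices and that $M\setminus P$ is two $3$-balls, and then tries to dualize to a triangulation with $n$ tetrahedra in order to quote the previous estimate $sc(M)\leqslant 4n$.

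The gap is in the dualization step. For the dual of $P$ to be a (possibly singular) triangulation, $P$ must be \emph{special}: every $2$-stratum a disc \emph{and} every component of the singular graph an arc with endpoints at true vertices. Your modification claim --- ``cut any non-disc $2$-stratum with auxiliary arcs chosen so as not to introduce new true vertices'' --- is not an operation on $2$-polyhedra: to create a new singular arc you must attach a new $2$-dimensional piece, and the boundary of that piece produces new singular curves. Worse, the claim definitely fails in the degenerate case where some meridian $\mu_1^i$ of $H_1$ is disjoint from every meridian of $H_2$: then $\mu_1^i$ is an isolated \emph{circle} component of the singular graph of $P$ carrying no true vertices, the $\Sigma_g$-regions abutting it are not discs, and there is no local modification that removes the circle without either creating a true vertex or creating a new singular circle. (Attaching a disc in a complementary ball along an essential curve of a region, the natural move, does exactly that: it produces a fresh vertex-free circle.) The paper handles precisely this degenerate case by a different argument: it recognizes the Heegaard splitting as reducible, writes $M$ as a connected sum with copies of $S^2\times S^1$ or $S^2\,\widetilde{\times}\,S^1$, and invokes Theorem~\ref{teo:sub_additivity} together with $sc(S^2\times S^1)=sc(S^2\,\widetilde{\times}\,S^1)=0$.

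Your fall-back ``insert the tria-to-surf gadget near each true vertex and glue along the triple lines of $P$'' inherits the same problem: the gadget matches along four-sheeted (Dehn-surface) triple lines, whereas the generic triple lines of $P$ are three-sheeted (Y-type), and circle components of the singular graph with no vertices have no gadget to glue into, so the pasting is not defined away from a neighbourhood of the vertices. To repair the argument you would need either (i) an explicit, verified procedure for converting $P$ into a special polyhedron with at most $n$ true vertices, which is delicate and false as stated, or (ii) the paper's own direct construction.
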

\begin{proof}
Let $g$ be the number of meridians of $H_*$ and $H$ be the common boundary of $H_1$ and $H_2$.
Let $\mu_1^i$ (resp.~$\mu_2^i$), with $i=1,\ldots,g$, be the meridians of $H_1$ (resp.~$H_2$).

Let us suppose at first that each meridian of $H_1$ intersects at least one of $H_2$, and {\em vice versa}.
Let us call $D_j^i$ be the disc bounded by $\mu_j^i$ in $H_j$.
The boundary of a small regular neighbourhood of a disc $D_j^i$ is a sphere (say $S_j^i$) intersecting $H$ transversely along two loops parallel to $\mu_j^i$, see Fig.~\ref{fig:meridian}.
\begin{figure}[t]
  \centerline{\includegraphics{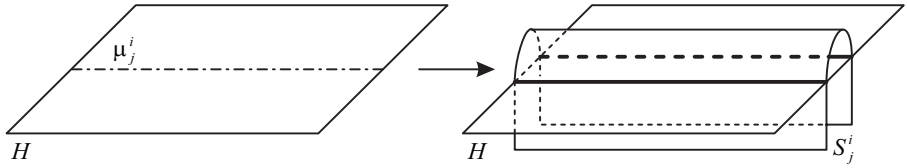}}
  \caption{The sphere $S_j^i$ near $H$.}
  \label{fig:meridian}
\end{figure}
The union of $H$ and the spheres $S_*^*$ is a Dehn surface, say $\Sigma$; we show it near an intersection point between a meridian of $H_1$ and one of $H_2$ in Fig.~\ref{fig:meridian_intersection}.
\begin{figure}[t]
  \centerline{\includegraphics{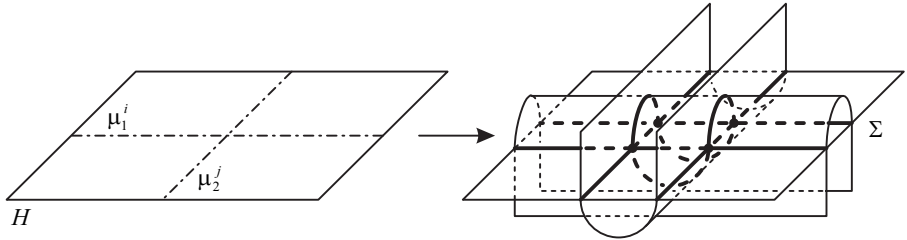}}
  \caption{The Dehn sphere $\Sigma$ near an intersection point between a meridian of $H_1$ and one of $H_2$.}
  \label{fig:meridian_intersection}
\end{figure}
We prove now that $\Sigma$ is quasi-filling.
Since $H$ is contained in $\Sigma$ and since the role of the two handlebodies is symmetrical, it is enough to prove that $H_1\setminus\Sigma$ is made up of balls.
The spheres $S_1^*$ divide $H_1$ into balls, because the discs $D_1^*$ do and each $S_1^i \cap H_1$ is made up of two discs parallel to $D_1^i$.
Moreover, the spheres $S_2^*$ divide these balls into smaller balls, because each meridian of $H_2$ intersects at least one of $H_1$.
Each point of intersection of two meridians yields 4 triple point (see Fig.~\ref{fig:meridian_intersection}), hence the number of triple points of $\Sigma$ is $4n$.
Therefore, we have proved that if each meridian of $H_1$ intersects at least one of $H_2$, and {\em vice versa}, then we have $sc(M) \leqslant 4n$.

Suppose now that some meridian of $H_1$ does not intersect any of $H_2$ (the case of a meridian of $H_2$ is symmetrical).
We can suppose, without loss of generality, that this meridian is $\mu_1^1$.
This Heegaard splitting is reducible.
A loop in $H$ parallel to $\mu_1^1$ bounds in fact a disc both in $H_1$ and in $H_2$.
Let us call these discs $D_1$ and $D_2$, respectively.
We can suppose that $D_1$ is parallel to $D_1^1$.
The disc $D_1$ does not disconnect $H_1$, therefore the sphere $D_1 \cup D_2$ does not disconnect $M$.
Hence, $M$ is the connected sum of $S^2 \times S^1$ (or $S^2 \timtil S^1$) and another manifold, say $M'$.
Moreover, we can explicitly construct a Heegaard splitting of $M'$.
Namely, the two handlebodies, say $H'_j$, are obtained by cutting $H_j$ along $D_j$ (for $j=1,2$); the gluing map coincides with the old one out of the two pairs of discs created in the boundary of $H_i$ by the cut and identifies the four discs in pairs.
Moreover, we consider the class of meridians of $H'_1$ made up of $\mu_1^i$, with $i=2,\ldots,g$.
In order to get a class of meridians of $H'_2$, we consider the meridians $\mu_2^i$ discarding one of them: in order to choose the one to discard, we look at the two spheres with holes obtained by cutting the boundary of $H'_2$ along the meridians $\mu_2^i$ and we discard a $\mu_2^i$ that is adjacent to both spheres.
Note that a good choice of the $\mu_2^i$ to discard may yield a decrease of the number of intersections between the meridians; however, we only know that the number of intersections between the meridians does not increase.

If now some meridian of $H'_1$ does not intersect any of $H'_2$, or {\em vice versa}, then we repeat the procedure.
Eventually, we have that $M$ is the connected sum of some copies of $S^2 \times S^1$ (and $S^2 \timtil S^1$) and another manifold $M^{(l)}$.
Moreover, we obtain that $M^{(l)}$ has a Heegaard splitting such that each meridian of a handlebody intersects at least one of the other handlebody.
If there is no meridian, $M^{(l)}$ is the sphere $S^3$ and hence $sc(M^{(l)})=0 \leqslant 4n$; otherwise, we have constructed a quasi-filling Dehn surface of $M^{(l)}$ with 4 triple points for each intersection of the meridians of the Heegaard splitting of $M^{(l)}$.
The number of intersections cannot increase along the procedure, therefore we have $sc(M^{(l)}) \leqslant 4n$.
Since we have $sc(S^2 \times S^1)=0$ (and $sc(S^2 \timtil S^1)=0$), by virtue of Theorem~\ref{teo:sub_additivity}, we have $sc(M) \leqslant sc(M^{(l)}) \leqslant 4n$.
The proof is complete.
\end{proof}

\begin{rem}
In~\cite{Matveev:book} the following is proven:\\
{\em Suppose $M = H_1 \cup H_2$ is a Heegaard splitting of a closed 3-manifold $M$ such that the meridians of the handlebody $H_1$ intersect those of $H_2$ transversely in $n$ points.
Suppose also that the closure of one of the components into which the meridians of $H_1$ and $H_2$ divide $\partial H_1 = \partial H_2$ contains $m$ such points.
Then, we have $c(M) \leqslant n-m$.}

We can use this result to improve the estimation of $sc(M)$ of Theorem~\ref{teo:heegaard_estimation} if the decomposition of $M$ into prime manifolds contains only \ptwoirred\ closed 3-manifolds different from $\Lthreeone$.
Let $M$ be the connected sum of \ptwoirred\ manifolds $M_k$, such that no $M_k$ is $\Lthreeone$.
By virtue of the statement above, we have $c(M) \leqslant n-m$.
Since the Matveev complexity is additive under connected sum, we have
$\sum_k c(M_k) = c(M) \leqslant n-m$.
Note that Theorem~\ref{teo:esimation_matveev-surface} implies $sc(M_k) \leqslant 4c(M_k)$ for each $M_k$.
Then, by virtue of Theorem~\ref{teo:sub_additivity}, we have
$sc(M) \leqslant \sum_k sc(M_k) \leqslant \sum_k 4c(M_k) \leqslant 4n-4m$.
\end{rem}

\begin{teo}\label{teo:chir_to_surf}
Suppose $M$ is obtained by Dehn surgery along a framed link $L$ in $S^3$ (hence $M$ is orientable).
Moreover, suppose $L$ has a projection such that the framing is the blackboard one, there are $n$ crossing points and there are $m$ components containing no overpass.
Then, we have $sc(M) \leqslant 8n + 4m$.
\end{teo}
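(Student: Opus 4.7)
The strategy is to construct a triangulation $\calT$ of $M$ with at most $2n+m$ tetrahedra and then apply the bound $sc(M)\leqslant 4|\calT|$ (the triangulation estimate in Section~5.2, proved immediately before Theorem~\ref{teo:heegaard_estimation}); this will automatically give $sc(M)\leqslant 4(2n+m)=8n+4m$. An entirely parallel route through Theorem~\ref{teo:heegaard_estimation}, using a Heegaard splitting whose meridians cut $\partial H_1$ in $2n+m$ points, would of course give the same bound.

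To build $\calT$, I would start from the diagram $D$ of $L$ on the projection sphere $P=S^2\subset S^3$, pushing over-strands slightly above $P$ and under-strands slightly below so that $L$ is embedded. Around each of the $n$ crossings I install a small combinatorial ``crossing block'' triangulated by exactly $2$ tetrahedra. The block is chosen so that (i) its boundary triangulation is compatible with the four arcs of $D$ meeting at the crossing, and (ii) performing Dehn surgery with the blackboard framing on the piece of $L$ running through the block is absorbed into the internal combinatorics of the block. For each of the $m$ components $L_j$ with no overpass, the crossing blocks around $L_j$ (if any) do not reach into the interior of the new surgery solid torus $V_j$ glued in by the Dehn filling; a single extra tetrahedron per such component completes the triangulation inside $V_j$. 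Gluing the crossing blocks along the arcs of $D$, and inserting the no-overpass tetrahedra through the boundary tori of the tubular neighbourhoods of the components, produces a global triangulation $\calT$ of $M$ with $2n+m$ tetrahedra, and Step~5 is just a one-line application of the triangulation estimate.

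\textbf{Main obstacle.} The hard part is the design of the crossing block together with the verification that exactly one additional tetrahedron per no-overpass component suffices. For a component with at least one overpass, the surrounding crossing blocks already reach deeply enough into the associated surgery solid torus to triangulate it entirely; for a no-overpass component --- either one with no crossings at all, or one that appears always as an understrand --- the blocks only touch the surgery solid torus from one side, leaving a single region that must be filled by the extra tetrahedron. Checking that no further tetrahedra are needed, and that the block-by-block gluing is globally consistent with the blackboard framing along every arc of $D$, is the delicate combinatorial core of the argument; all the numerical content of the bound $8n+4m$ is concentrated in this step.
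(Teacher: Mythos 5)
Your plan goes by a genuinely different route than the paper: you propose to build a triangulation $\calT$ of the surgered manifold $M$ with $2n+m$ tetrahedra and then invoke the estimate $sc(M)\leqslant 4\,|\calT|$, whereas the paper constructs a quasi-filling Dehn surface in $M$ directly (the projection sphere, cylinders over the arcs and crossings giving the tori $T_i$, small spheres for the no-overpass components, and spheres around discs bounded by the framing curves inside the filling solid tori) and simply counts $8$ triple points per crossing and $4$ per no-overpass component. The triangulation detour would be a perfectly legitimate alternative if the triangulation existed; but the one thing your whole argument rests on --- a $2$-tetrahedron ``crossing block'' whose internal combinatorics absorb the blackboard-framed Dehn filling, together with a single extra tetrahedron per no-overpass component --- is exactly the step you declare to be the delicate core and do not carry out.

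Worse, the count $|\calT|\leqslant 2n+m$ is already false in the simplest instance. Take $n=0$, $m=1$: a round unknot diagram with blackboard framing, hence $0$-surgery, so $M=S^2\times S^1$. Your formula promises a triangulation of $S^2\times S^1$ with $2\cdot 0+1=1$ tetrahedron, but $S^2\times S^1$ admits no one-tetrahedron triangulation (two tetrahedra are needed). Since the desired conclusion $4|\calT|\leqslant 8n+4m$ forces $|\calT|\leqslant 2n+m$ exactly, there is no slack to absorb this failure; and in general a two-tetrahedra-per-crossing triangulation of a \emph{surgered} manifold would be far below the standard triangulations of link exteriors (which already use at least four tetrahedra per crossing before filling). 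As written, this is a plan with an unfilled and, in fact, unattainable core step, not a proof.
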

\begin{proof}
The projection plane can be regarded as a subset of a sphere $S^2$ contained in $S^3$.
We add a cylinder for each arc of the projection, as shown in Fig.~\ref{fig:cylinder}.
\begin{figure}[t]
  \centerline{\includegraphics{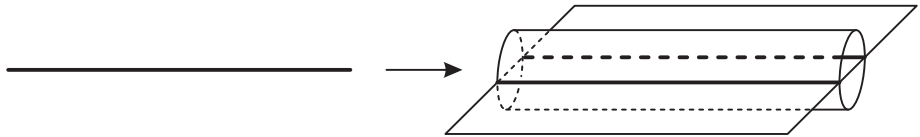}}
  \caption{Adding a cylinder for each arc of the projection.}
  \label{fig:cylinder}
\end{figure}
We connect these cylinders by a pair of intersecting cylinders for each crossing point, as shown in Fig.~\ref{fig:crossing}.
\begin{figure}[t]
  \centerline{\includegraphics{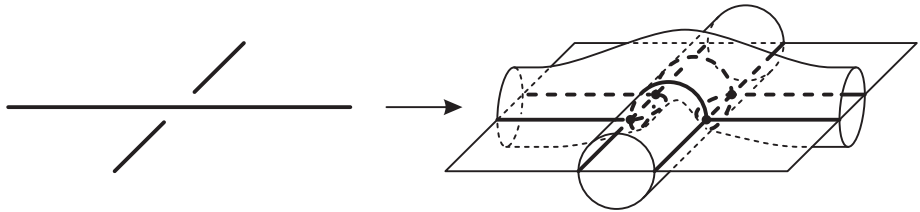}}
  \caption{Connecting the cylinders through the crossings.}
  \label{fig:crossing}
\end{figure}
The result is a Dehn surface, say $\Sigma$, contained in $S^3$ and made up of the sphere $S^2$ and some tori $T_i$ (namely, one torus for each component of $L$).

The complement of $\Sigma$ in $S^3$ is made up of balls and solid tori.
More precisely, we have one torus for each component of $L$ and one more torus for each component containing no overpass: note indeed that if $T_i$ is the torus corresponding to one of these components, then $S^3 \setminus (S^2 \cup T_i)$ is made up of two balls and two tori, both of which are not divided into balls by overpasses.
In order to divide them, we add one small sphere for each component containing no overpass, as shown in Fig.~\ref{fig:small_sphere}, getting another Dehn surface, say $\Sigma'$.
\begin{figure}[t]
  \centerline{\includegraphics{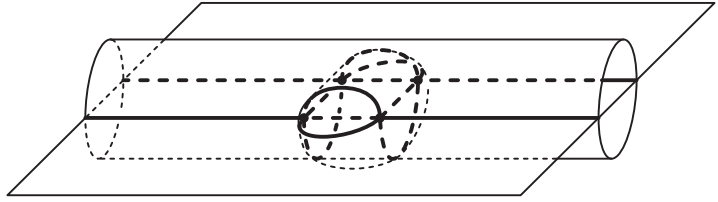}}
  \caption{Adding small spheres to divide the useless tori into balls.}
  \label{fig:small_sphere}
\end{figure}

Now, the complement of $\Sigma'$ in $S^3$ is made up of some balls and one torus for each component of the link $L$.
Moreover, up to isotopy, we can suppose that the union of the tori is a regular neighbourhood of the link $L$.
Hence, the Dehn surface $\Sigma'$ can be regarded as a Dehn surface in $M$.
Furthermore, the complement of $\Sigma'$ in $M$ is also made up of some balls and one torus, say $T'_i$, for each component of the link $L$.

In order to get a quasi-filling Dehn surface of $M$, we will divide the tori into balls by adding spheres.
For each torus $T_i$, let $\gamma_i$ be a curve giving the (blackboard) framing, disjoint from $S^2$ and lying above $S^2$ (with respect to the projection); see Fig.~\ref{fig:framing}.
\begin{figure}[t]
  \centerline{\includegraphics{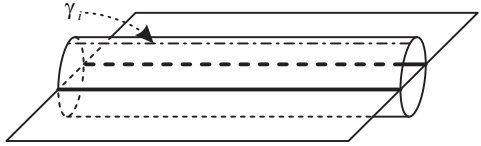}}
  \caption{The choice of the curves giving the (blackboard) framing.}
  \label{fig:framing}
\end{figure}
Moreover, consider a disc $D_i$ bounded in $T'_i$ by the curve $\gamma_i$ and consider the boundary of a small regular neighbourhood of the disc, say $S_i$.
Each surface $S_i$ is a sphere intersecting $\Sigma'$ along two curves (see Fig.~\ref{fig:framing_sphere}).
\begin{figure}[t]
  \centerline{\includegraphics{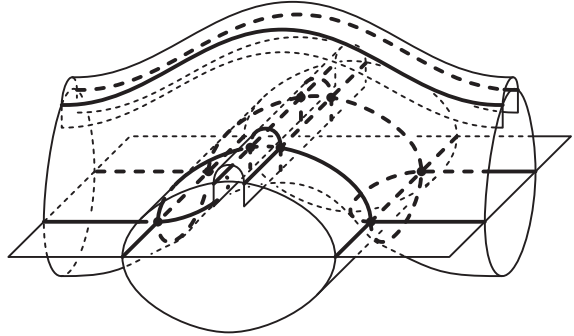}}
  \caption{The spheres dividing the tori into balls yield 4 triple points for each crossing point of the projection of $L$.}
  \label{fig:framing_sphere}
\end{figure}
We add the spheres $S_i$ to $\Sigma'$ and we call the result $\Sigma''$.
The complement of the Dehn surface $\Sigma''$ in $M$ is made up of balls, so $\Sigma''$ is a quasi-filling Dehn surface of $M$.
Moreover, $\Sigma''$ has 8 triple points for each crossing of the projection of $L$ (see Fig.~\ref{fig:framing_sphere}) and 4 triple points for each component of $L$ containing no overpass (see Fig.~\ref{fig:small_sphere}).
The proof is complete.
\end{proof}

\begin{rem}
If the framing is not the blackboard one, some curls must be added to the curves $\gamma_i$, as shown in Fig.~\ref{fig:curl}.
\begin{figure}[t]
  \centerline{\includegraphics{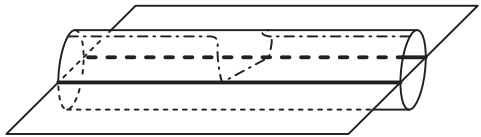}}
  \caption{A curl added if the framing is not the blackboard one.}
  \label{fig:curl}
\end{figure}
However, a simple generalisation of the procedure shown in the proof of Theorem~\ref{teo:chir_to_surf} yields an upper bound for surface-complexity by means of non-blackboard framings, where the wirthe ({\em i.e.}~the blackboard framing) appears.
More precisely, with the notation of Theorem~\ref{teo:chir_to_surf}, if $fr_i$ and $w_i$ are, respectively, the framing and the wirthe of the $i$-th component of the link, then we have
$sc(M) \leqslant 8n + 4m + 4\sum_i |fr_i-w_i|$.
\end{rem}

\appendix

\section{Starting enumeration}

We recall that there are eight important 3-dimensional geometries: six of them 
concern Seifert manifolds ($\matS^2\times\matR$, $\matE^3$, $\matH^2\times\matR$, $\matS^3$, Nil and $\widetilde{{\rm SL}_2\matR}$), one concerns Stallings manifolds (Sol) and the last is the hyperbolic one~\cite{Sco}.
The geometry of a Seifert manifold is determined by two invariants of any of its fibrations, namely the Euler characteristic $\chiorb$ of the base orbifold and the Euler number $e$ of the fibration, according to Table~\ref{tab:Seifert}.
\begin{table}[t]
	\centerline{\begin{tabular}{c|ccc} 
	\phantom{\Big|} & $\chiorb > 0$ & $\chiorb = 0$ & $\chiorb < 0$ \\ \hline
	\phantom{\Big|} $e = 0$ & $\matS^2\times\matR$ & $\matE^3$ & $\matH^2\times\matR$ \\
	\phantom{\Big|} $e \neq 0$ & $\matS^3$ & Nil & $\widetilde{{\rm SL}_2\matR}$ \\ 
	\end{tabular}}
	\caption{The six Seifert geometries.}
	\label{tab:Seifert}
\end{table}
Finally, we recall that non-orientable Seifert manifolds cannot have $e \neq 0$, {\em i.e.}~cannot be of type $\matS^3$, Nil and $\widetilde{{\rm SL}_2\matR}$.

In a subsequent paper~\cite{Amendola:next}, we will prove the following.
\begin{teo}
\begin{itemize}
\item
There are no \ptwoirred\ orientable closed 3-manifolds having surface-complexity one of type $\matH^2\times\matR$, $\widetilde{{\rm SL}_2\matR}$, Sol, hyperbolic or non-geometric.
\item
There are \ptwoirred\ orientable closed 3-manifolds having surface-com\-plexity one of type $\matS^3$ ({\em e.g.}~the lens spaces $L_{6,1}$, $L_{8,3}$, $L_{14,3}$, $L_{12,5}$) and $\matE^3$ ({\em e.g.}~the 3-dimensional torus $S^1 \times S^1 \times S^1$).
\item
There are no \ptwoirred\ non-orientable closed 3-manifolds having sur\-face-complexity one  of type $\matH^2\times\matR$, hyperbolic or non-geometric.
\item
There are \ptwoirred\ non-orientable closed 3-manifolds having surface-complexity one of type $\matE^3$ ({\em e.g.}~the trivial bundle over the Klein bottle $K \times S^1$).
\end{itemize}
\end{teo}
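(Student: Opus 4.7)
The plan is to reduce the entire classification to a finite combinatorial enumeration via duality with cubulations. By Theorem~\ref{teo:minimal_filling}, every \ptwoirred{} closed 3-manifold $M$ with $sc(M)=1$ admits a minimal filling Dehn surface with exactly one triple point; via the duality between filling Dehn surfaces and cubulations recalled in Section~3, such a Dehn surface corresponds to a cubulation of $M$ consisting of a single cube. Conversely, any one-cube cubulation yields a candidate manifold of surface-complexity at most one (equal to one unless the quotient is $S^3$, $\RP^3$ or $\Lfourone$). The theorem therefore reduces to enumerating the one-cube cubulations and identifying the resulting manifolds together with their geometric types.

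The first step is the enumeration. The six faces of a cube must be paired up in three pairs, each pair via one of the admissible combinatorial gluings of a square to a square; this produces a short finite list of candidates. For each candidate one checks the usual vertex-link condition so that the quotient is a closed 3-manifold, and discards those quotients which are $S^3$, $\RP^3$, $\Lfourone$, or reducible. The second step is to recognize each surviving quotient and determine its geometric type. For the existence claims one exhibits explicit one-cube cubulations realizing the listed examples $L_{6,1}$, $L_{8,3}$, $L_{14,3}$, $L_{12,5}$, $S^1\times S^1\times S^1$ and $K\times S^1$, verifying each identification by computing, say, the first homology, a fundamental-group presentation, or Seifert invariants. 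For the non-existence claims, the estimate $c(M)\leqslant 8\,sc(M)=8$ from Theorem~\ref{teo:esimation_matveev-surface} rules out hyperbolic closed 3-manifolds immediately, since the smallest of these has Matveev complexity $9$. The remaining excluded geometries $\matH^2\times\matR$, $\widetilde{{\rm SL}_2\matR}$, Sol, and non-geometric are eliminated case by case, either by invoking the existing tables of closed 3-manifolds of Matveev complexity at most $8$, or, when the quotient is Seifert, by computing the base orbifold and the Euler number of the induced fibration and reading off its type from Table~\ref{tab:Seifert}.

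The main obstacle is the second step: translating the cube-gluing data of each candidate into a usable presentation of the resulting 3-manifold (a Heegaard splitting, a Dehn surgery, or Seifert invariants), and from there to its geometric type. The enumeration itself is essentially trivial, but the bookkeeping required to identify each quotient, to detect possible duplicates in the list, and in particular to exclude the non-geometric case (where no structure theorem is available) is where the real work lies. Once this dictionary between one-cube cubulations and small 3-manifolds is complete, the existence statements follow by direct inspection and the non-existence statements follow from the complexity bound together with the resulting short list of Seifert pieces.
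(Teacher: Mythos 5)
The paper does not actually contain a proof of this theorem: it states it and explicitly defers both the proof and the full classification of surface-complexity-one manifolds to the cited follow-up paper \cite{Amendola:next}, whose very title (``The closed 3-manifolds obtained from one cube'') indicates that the intended strategy is exactly the reduction you describe. So there is no in-paper argument to compare against; I can only assess your outline on its own terms.

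Your reduction is sound: Theorem~\ref{teo:minimal_filling} gives a minimal filling Dehn surface with a single triple point, the duality of Section~3 converts it into a one-cube cubulation of $M$, and the only \ptwoirred\ quotients with surface-complexity zero that must be discarded from the converse direction are $S^3$, $\RP^3$ and $\Lfourone$. The use of $c(M)\leqslant 8\,sc(M)=8$ from Theorem~\ref{teo:esimation_matveev-surface} to exclude the hyperbolic case is legitimate and clean, since the Weeks manifold (the minimal closed orientable hyperbolic 3-manifold) has Matveev complexity $9$ and non-orientable closed hyperbolic manifolds are even larger. However, that bound alone does \emph{not} dispose of the Sol, $\matH^2\times\matR$, $\widetilde{{\rm SL}_2\matR}$ or non-geometric cases: closed manifolds of each of those types with Matveev complexity at most $8$ do exist (e.g.\ the smallest Sol torus bundle already has complexity well below $8$), so for those geometries one genuinely needs the enumeration, normalization and identification of the surviving one-cube quotients. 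You acknowledge this but do not carry it out, and that enumeration-and-identification step is precisely the content the author postponed. In short, your proposal is a faithful sketch of the intended argument and its reduction step is correct, but it is not a complete proof, and the heavy lifting (the combinatorial census and the case-by-case recognition of each quotient's geometry) remains.
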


\section{The bidimensional case}

Let $S$ be a connected closed surface.
Let us denote by $(S^1)^{\sqcup n}$ the disjoint union of $n$ circles $S^1$.
A subset $\Gamma$ of $S$ is said to be a {\em Dehn loop of $S$}
if there exists $n\in\matN$ and a transverse immersion $f\co (S^1)^{\sqcup n}\to S$ such that $\Gamma = f((S^1)^{\sqcup n})$; some examples are shown in Fig.~\ref{fig:bidim_example}.
\begin{figure}[t]
  \centerline{\includegraphics{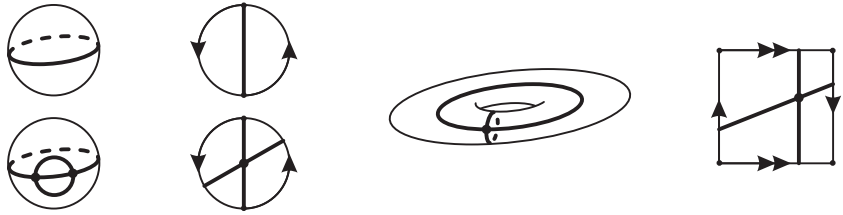}}
  \caption{Some Dehn loops of the sphere $S^2$, the projective plane $\RP^2$, the torus $T$ and the Klein bottle $K$.}
  \label{fig:bidim_example}
\end{figure}
There are only two types of points in $\Gamma$: smooth points and crossing points (see Fig.~\ref{fig:bidim_neigh}). The set of crossing points is denoted by $C(\Gamma)$.
\begin{figure}[t]
  \centerline{\includegraphics{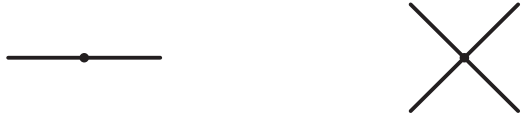}}
  \caption{Smooth points (left) and crossing points (right).}
  \label{fig:bidim_neigh}
\end{figure}
A Dehn loop $\Gamma$ of $S$ will be called {\em quasi-filling} if $S \setminus \Gamma$ is made up of discs; it will be called {\em filling} if its singularities induce a cell-decomposition of $S$: more precisely,
\begin{itemize}
\item $C(\Gamma) \neq \emptyset$,
\item $\Gamma \setminus C(\Gamma)$ is made up of intervals,
\item $S \setminus \Gamma$ is made up of discs ({\em i.e.}~$\Gamma$ is quasi-filling).
\end{itemize}
Note that the second condition is automatically satisfied once the other two are fulfilled.
It can be easily shown that only the sphere $S^2$ and the projective plane $\RP^2$ have quasi-filling Dehn loops that are not filling, while all non-positive Euler characteristic surfaces have only filling Dehn loops; see Fig.~\ref{fig:bidim_example}.
Each surface $S$ has a filling Dehn loop.
However, as opposed to the 3-dimensional case, a filling Dehn loop does not determine $S$; in fact, the bouquet of two circles is a Dehn loop of the projective plane $\RP^2$, the torus $T$ and the Klein bottle $K$; see Fig.~\ref{fig:bidim_example}.

We define the {\em loop-complexity} $lc(S)$ of a connected closed surface $S$ as the minimal number of crossing points of a quasi-filling Dehn loop of $S$.
The loop-complexity of the surface with Euler characteristic $\chi$ is $1-\chi$, except for the sphere $S^2$ having loop complexity zero.
In terms of the genus of the surface, we have:
\begin{itemize}
\item
the loop-complexity of the sphere $S^2$ and of the projective plane $\RP^2$ is zero,
\item
the loop-complexity of the orientable genus-$g$ surface is $2g-1$,
\item
the loop-complexity of the non-orientable genus-$g$ surface is $g-1$.
\end{itemize}
Note that it is not true that the loop-complexity of the connected sum of two surfaces is at most the sum of their loop-complexities; for instance, we have $K=\RP^2\#\RP^2$ while $lc(K)=1 \not\leqslant 0=lc(\RP^2)+lc(\RP^2)$.

A cubulation $\calC$ of a connected closed surface $S$ ({\em i.e.}~a cell-decomposition of $S$ such that each 2-cell, called {\em square}, is glued along 4 edges) can be constructed from a filling Dehn loop $\Gamma$ of $S$ by considering an abstract square for each crossing point of $\Gamma$ and by gluing the squares together along the edges.
However, there are two possibilities for gluing two squares along an edge and the abstract polyhedron $\Gamma$ does not encode any information to choose the right one.
(In some sense, this explains why a filling Dehn loop does not determine unambiguously one surface.)
If we consider also the immersion of $(S^1)^{\sqcup n}$ in the surface $S$ containing the Dehn loop $\Gamma$, we can choose the right identifications and construct a cubulation $\calC$ of $S$.

The converse construction can also be performed obtaining a filling Dehn loop of a connected closed surface $S$ from a cubulation of $S$.
These constructions tells us that $lc(S)$ is the minimal number of squares in a cubulation of $S$, except for $S^2$ and $\RP^2$ whose loop-complexity is zero.

\paragraph{Acknowledgements}
I would like to thank the Department of Mathematics of the University of Salento for the nice welcome and, in particular, Prof.~Giuseppe De Cecco for his willingness.

\begin{small}

\end{small}

\vspace{2cm}
\noindent
Department of Mathematics\\
University of Salento\\
Palazzo Fiorini, Via per Arnesano\\
I-73100 Lecce, Italy\\ 
amendola@mail.dm.unipi.it

\end{document}